



%
%





\documentclass[12pt,a4paper]{amsart}

\usepackage{amscd}
\usepackage{graphicx}
\usepackage{amssymb,amsfonts,latexsym}

\newif\iffurther
\furtherfalse

\input xy
\xyoption {all}

\usepackage{geometry}
\setlength{\topmargin}{0mm} %
\setlength{\oddsidemargin}{+12mm} %
\setlength{\evensidemargin}{+12mm} %
\setlength{\textwidth}{5.8in} %
\setlength{\textheight}{9.1in}


\vfuzz2pt 

\hfuzz2pt 


\newtheorem{thm}{Theorem}[section]
\newtheorem{cor}[thm]{Corollary}
\newtheorem{lem}[thm]{Lemma}
\newtheorem{prop}[thm]{Proposition}
\newtheorem{exam}[thm]{Example}
\newtheorem{ques}[thm]{Question}
\newtheorem{prob}[thm]{Problem}
\theoremstyle{definition}

\newtheorem{defn}[thm]{Definition}
\theoremstyle{remark}
\newtheorem{rem}[thm]{Remark}
\numberwithin{equation}{section}


\newcommand\Tref[1]{{Theorem~\ref{#1}}}
\newcommand\Pref[1]{{Proposition~\ref{#1}}}
\newcommand\Lref[1]{{Lemma~\ref{#1}}}
\newcommand\Cref[1]{{Corollary~\ref{#1}}}
\newcommand\Rref[1]{{Remark~\ref{#1}}}
\newcommand\Dref[1]{{Definition~\ref{#1}}}
\newcommand\Sref[1]{{Section~\ref{#1}}}
\newcommand\Ssref[1]{{Subsection~\ref{#1}}}
\newcommand\Qref[1]{{Question~\ref{#1}}}
\newcommand\Eref[1]{{Example~\ref{#1}}}
\newcommand\Fref[1]{{Figure~\ref{#1}}}
\newcommand\eq[1]{{(\ref{#1})}}

\long\def\forget#1\forgotten{}


\newcommand{\isom}{\cong}

\newcommand{\M}{\mathcal{M}}
\newcommand{\fp}{\frak{p}}
\newcommand{\mQ}{\mathbb{Q}}

\newcommand{\Z}{\mathbb{Z}}
\newcommand{\F}{\mathbb{F}}
\def\s{{\sigma}}


\newcommand{\set}[1]{\left\{#1\right\}}

\newcommand{\ra}{\rightarrow}
\newcommand\sg[1]{{\left<{#1}\right>}}
\newcommand\subjectto{{\,|\ }}
\newcommand\suchthat{{\,:\ }}

\DeclareMathOperator\Gal{Gal}
\def\({\left(}
\def\){\right)}
\def\divides{{\,|\,}}
\def\co{{\,{:}\,}}
\def\ndivides{{\not|\,}}
\newcommand\card[1]{{\left|{#1}\right|}}
\newcommand\dimcol[2]{{[{#1}\!:\!{#2}]}}

\newcommand\intpart[1] {{\lceil #1 \rceil}}
\newcommand\oline[1] {{\overline{#1}}}

\newcommand\Br{{\operatorname{Br}}}

\newcommand\tame[1]{{{#1}_{\operatorname{tr}}}}
\newcommand\unram[1]{{{#1}_{\operatorname{un}}}}

\newcommand\res[1][{}] {{\operatorname{res}_{#1}}}
\newcommand\sep[1]{{\widetilde{#1}}}

\newcommand\Aut{{\operatorname{Aut}}}

\newcommand\sub{{\,\subset\,}}
\renewcommand\Im{{\operatorname{Im}}}

\newcommand\SL[1][]{{\operatorname{SL}_{#1}}}

\newcommand\PGL[1][]{{\operatorname{PGL}_{#1}}}
\newcommand\vspan{{\operatorname{span}}}
\newcommand\rank{{\operatorname{rank}}}
\newcommand\odd{{\operatorname{odd}}}
\newcommand\mul[1]{{#1^\times}}
\newcommand\md[2][p]{{\mul{#2}/\mul{#2}^{#1}}}
\newcommand\pro[3][]{{\if#1N{(#2)^{\!\operatorname{N}}_{#3}}\else{(#2)_{#3}}\fi}} 

\def\Caseiv{{(\ref{caseiv})}}     
\def\Caseviii{{(\ref{caseviii})}} 

\renewcommand\nu{{v}}
\renewcommand\varpi{{w}}

\def\sensitive{sensitive}


\begin{document}

\title[Realizability and admissibility under extensions] 
{Realizability and admissibility under extension of $p$-adic and number fields}

\def\Tech{Department of Mathematics, Technion-Israel Institute
of Technology, Haifa 32000, Israel}

\author{ Danny Neftin}

\address{\Tech}
\email{neftind@tx.technion.ac.il}%

\author{ Uzi Vishne }

\def\BIU{Deptartment of Mathematics, Bar Ilan University,
 Ramat Gan 52900, Israel}
\address{\BIU}
\email{vishne@math.biu.ac.il}




\begin{abstract}
\forget 
A finite group $G$ is $K$-admissible if there exists a
$G$-crossed product $K$-division algebra. In this manuscript we
study the behavior of admissibility under extensions of number
fields $M/K$. Assuming the  Grunwald-Neukirch property over $M$,
our study reduces to realization problems over $p$-adic fields. We
treat the latter for groups $G$ for which every Sylow subgroup is
normal, or metacyclic and any $2$-Sylow subgroup is metacyclic,
and for all extensions of $p$-adic fields, $p$ odd, but a list of
$17$ sensitive extensions. For such groups that are $K$-admissible,
we give necessary and sufficient conditions for the group to be
$M$-admissible whenever $M/K$ locally avoids the sensitive
extensions. 
\forgotten
%
A finite group $G$ is $K$-admissible if there exists a $G$-crossed
product $K$-division algebra. In this manuscript we study the
behavior of admissibility under extensions of number fields $M/K$.
We show that in many cases, including Sylow metacyclic and nilpotent groups whose order is prime to the number of roots of unity in $M$,  a
$K$-admissible group $G$ is $M$-admissible if and only if $G$
satisfies the easily verifiable Liedahl condition over $M$. 

\end{abstract}

\date{Sep. 21, 2010}%

\maketitle


%


\section{Introduction}

Let $K$ be a field. A field $L\supseteq K$ is called $K$-adequate
if it is contained as a maximal subfield in a finite dimensional
central $K$-division algebra. A group $G$ is $K$-admissible if
there is a $G$-extension $L/K$, i.e. $L/K$ is a Galois extension
with $\Gal(L/K) \cong G$, so that $L$ is $K$-adequate.
Equivalently, $G$ is $K$-admissible if there is a $G$-crossed
product $K$-division algebra.
Ever since adequacy and admissibility were introduced in
\cite{Sch}, they were studied extensively over various
types of fields, especially over number fields. 

As oppose to realizability of groups as Galois groups, there are
known restrictions on the number fields $K$ over which a given group
is $K$-admissible. Liedahl's condition (which was shown by
Schacher \cite{Sch} over $\mQ$, and generalized by Liedahl \cite[Theorem~28]{Lid2}) describes such a restriction.
We say that $G$ satisfies Liedahl's condition over $K$, if for
every prime $p$ dividing $\card{G}$, one of the following holds:
\begin{enumerate}
\item[(i)] $p$ decomposes in $K$ (has at least two prime divisors),
\item[(ii)] $p$ does not decompose in $K$, and a $p$-Sylow subgroup $G(p)$ of
$G$ is metacyclic and admits a Liedahl presentation over $K$ (for
details see Definition \ref{def:LC} which is based on
\cite{Lid2}).
\end{enumerate}








In \cite[Theorem~9.1]{Sch}, Schacher showed that any finite group
$G$ is admissible over some number field $K$. However, for many
groups $G$ it is an open problem to determine the number fields
over which they are admissible. In fact, searching for an explicit
description for all groups seems hopeless. 

In this paper we fix a field $K$ over which $G$ is admissible and
ask over which finite extensions of $K$, $G$ is still admissible.
By assuming our group $G$ is realizable over $M$ and furthermore
can be realized over $M$ with prescribed local conditions, i.e.
satisfying the Grunwald-Neukirch (GN) property, this question
reduces to the following local realization problem:

\begin{prob}\label{lrp}
Let $m/k$ be an extension of $p$-adic fields and $G$ a group that
is realizable over $k$. Is there a subgroup $H$ of $G$ which
is realizable over $m$ and contains a $p$-Sylow subgroup of $G$?
\end{prob}

At first we consider the case of $p$-groups, where the problem is whether a $p$-group that is realizable over $k$, is realizable over an extension $m$ of $k$.
For $p$ odd, we
notice that the maximal pro-$p$ quotient $\oline{G_k(p)}$ of the
absolute Galois group $G_k$ is covered by $\oline{G_m(p)}$, providing a positive answer: 

\begin{prop}\label{prop 1}
Let $m/k$ be a finite extension of $p$-adic fields where $p$ is an
odd prime. Then any $p$-group that is realizable over $k$ is also
realizable over $m$.
\end{prop}

The simplest behavior one can hope for in terms of admissibility,
is that a $K$-admissible group $G$ would be $M$-admissible if and
only if it satisfies Liedahl's condition over $M$. This is indeed
the case for various classes of groups:
\begin{enumerate}
\item When all Sylow subgroups of $G$ are cyclic \cite[Theorem~2.8]{Sch};
\item When $G$ is abelian and does not fall into a special case over
$M$ \cite{Chr};
\item For metacyclic groups \cite{Lid2},\cite{Lidw};
\item For $G = \SL[2](5)$ \cite{FF};
\item For $G = A_6$ or $G = A_7$ \cite{ScS};
\item For $G = \PGL[2](7)$ \cite{AS}, and
\item For the Symmetric groups $G=S_n$, $1\leq n\leq 17$, $n\not=12,13$
(by \cite{GL}, \cite{Lid2} and \cite{Sch}).
\end{enumerate}

{}Using \Pref{prop 1}, we are able to add all the odd-order
$p$-groups having the GN-property over $M$ to this list:
\begin{prop}\label{intro-admiss of 2-gp}
Let $M/K$ be an extension of number fields and $p$ an odd prime.
Let $G$  be a $p$-group that is $K$-admissible and has the
GN-property over $M$. Then $G$ is $M$-admissible if and only if $G$
satisfies Liedahl's condition over $M$.
\end{prop}

Propositions \ref{prop 1} and \ref{intro-admiss of 2-gp} are
proved in \Ssref{ss:podd}. We note (in Section \ref{sec2}) that \Pref{intro-admiss of 2-gp} extends to nilpotent groups of odd order and (by \Rref{sylowmetacyclic.rem}) to Sylow metacyclic groups (having metacyclic Sylow subgroups).
However, the following example shows that  Problem~\ref{lrp} can have a negative answer for some $2$-groups:
\begin{exam}\label{Ex1.5}
There is a group $G$, of order $2^{6}$, which is realizable over
$\mQ_2$ but not over $\mQ_2(\sqrt{-1})$.
\end{exam}
For a proof, see \Cref{G is not realizable over Q_2(i)}.
In \Pref{3.8} we interpret this example globally:
\begin{exam}
There is a rational prime $q$ for which the group $G$ of
\Eref{Ex1.5} is $\mQ(\sqrt{q})$-admissible,  satisfies
Liedahl's condition over $\mQ(\sqrt{-1},\sqrt{q})$  but is not
$\mQ(\sqrt{-1},\sqrt{q})$-admissible.
\end{exam}

Liedahl showed that a similar phenomena happens for the groups
$S_{n}$, $n=12,13$ and the local extension
$\mQ_2(\sqrt{-3})/\mQ_2$ (see \cite{GL}). We shall restrict our
discussion to groups which are either of odd order, or with
metacyclic $2$-Sylow subgroups. 

For some $p$-adic extensions $m/k$, for $p$ odd, including
extensions in which the inertia degree $f(m/k)$ is a $p$-power and
$\dimcol{m}{k}>5$ we show that $G_m$ covers the maximal quotient
of $G_k$ with a normal $p$-Sylow subgroup.

We use this method to answer Problem~\ref{lrp} positively for odd primes, under the following assumptions. The list of `sensitive' extensions of $p$-adic fields,
($16$ with $p=3$ and one for $p=5$) is described in
Subsection~\ref{ss:sc}.


\begin{thm}\label{intro-local main thm}
Let $p$ be an odd prime. Let $m/k$ be a non-sensitive extension of
$p$-adic fields and $G$ a group with a normal $p$-Sylow subgroup, $P$.
Assume $G$ is realizable over $k$. Then there is a subgroup $H\leq
G$ that contains $P$ and is realizable over $m$.

\end{thm}
The question as to whether the non-sensitivity assumption can be removed
remains open. However, the assumption that every Sylow subgroup is
normal is essential: 
\begin{exam}\label{7x29}
Let  $G=C_7 \wr D$ where $$D = \sg{a,b\subjectto a^{7}=b^{29}=1,
a^{-1}ba=b^7};$$ thus the $7$-Sylow subgroups of $G$ are neither
normal nor metacyclic.

In \Eref{local-counterexample.exm} we show there exists an
extension $m/k$ of $7$-adic fields such that $G$ is realizable
over $k$, although no subgroup of $G$ that contains a $7$-Sylow
subgroup is realizable over $m$.
\end{exam}

We say that an
extension of number fields $M/K$ is sensitive if it has a
sensitive completion. The main theorem follows from \Tref{intro-local main thm}
by combining the local data (see \Ssref{ss:5.1}):
\begin{thm}\label{intro_main.thm}
Let $M/K$ be a non-sensitive extension of number fields. Let $G$ be a
group for which every Sylow subgroup is either normal or
metacyclic, and the $2$-Sylow subgroups are metacyclic.

Assume $G$ is $K$-admissible and has the GN-property over $M$.
Then $G$ is $M$-admissible if and only if $G$ satisfies Liedahl's
condition over $M$.
\end{thm}

Let $\mu_n$ denote the set of $n$-th roots of unity. As a consequence of \Tref{intro_main.thm} and \cite[Corollary
2]{Neu} we have:
\begin{cor}
Let $G$ be an odd order group for which every Sylow subgroup is
either normal or metacyclic. Let $M/K$ be a non-sensitive extension of number
fields so that $G$ is $K$-admissible and $\mu_{\card{G}}\cap M=\{1\}$. Then $G$ is $M$-admissible if and only
if $G$ satisfies Liedahl's condition over $M$.

In particular, if every prime dividing $\card{G}$ decomposes in
$M$ or if $M\cap \mQ(\mu_{\card{G}})=K\cap \mQ(\mu_{\card{G}})$,
then $G$ is $M$-admissible. \end{cor}

We also show that the assumption that every Sylow subgroup is
either normal or metacyclic is essential in \Tref{intro_main.thm}:
\begin{exam}
Let $G$ be the group defined in Example~\ref{7x29}.  In \Eref{global-counterexample},
we show furthermore that there is an extension of number fields
$M/K$ so that $G$ is $K$-admissible, satisfies Liedahl's condition
over $M$, has the GN-property over $M$, but is not $M$-admissible.
\end{exam}

As an example we use \Tref{intro_main.thm} to understand the
behavior of admissibility for a specific group (see
\Eref{mod.exm}):
\begin{exam}\label{F6.}
Let $K=\mQ(\sqrt{14})$ and $G=C_{13} \wr M_{3^3}$, where $M_{3^3}$
is the modular group
$$\sg{x,y| x^{-1}yx=y^{4}, x^3=y^{9}=1},$$
and $\wr$ is the standard wreath product. In \Eref{mod.exm}, we
show $G$ is $K$-admissible and deduce from \Tref{intro_main.thm}
that for a number field $M\supseteq K$, $G$ is $M$-admissible if
and only if $G$ satisfies Liedahl's condition. We therefore deduce
the admissibility behavior in  ~\Fref{F6} by checking Liedahl's
condition. 
\end{exam}

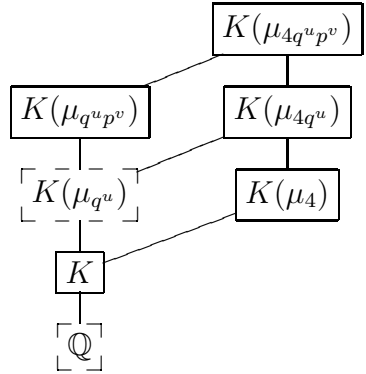
\begin{figure}\label{F6}
\begin{equation*}
\xymatrix@R=12pt{
{} & *+[F]{K(\mu_{4q^up^v})} \ar@{-}[d] \\
 *+[F]{K(\mu_{q^up^v})} \ar@{-}[d] \ar@{-}[ru] & *+[F]{K(\mu_{4q^u})}\ar@{-}[d] \\
 *+[F--]{K(\mu_{q^u})}  \ar@{-}[d] \ar@{-}[ru] & *+[F]{K(\mu_4)} \ar@{-}[ld] \\
*+[F]{K}  \ar@{-}[d] & {} \\
*+[F--]{\mQ} & {}
}
\end{equation*}
\caption{Admissibility of $C_{13}\wr M_{3^3}$: the group is admissible over the solid-boxed fields, but not over the dash-boxed ones (see \Eref{F6.}; here $u \geq 2, v\geq 1, p=13,q=3$ and $\mu_n$ are the $n$-th roots of unity.)}
\end{figure}

Similar examples (also given in  \Eref{mod.exm}) show that the
rank (the minimal number of generators) of the $p$-Sylow subgroups
of $K$-admissible groups is not bounded (as apposed to the case of
admissible $p$-groups discussed in \cite[Section 10]{Sch}).




The basic facts about admissibility of groups over number fields
are reviewed in \Sref{sec:pre}. We also discuss the behavior of
wild and tame admissibility under extension of number fields and
the connection between these types of admissibility to parts (i)
and (ii) (respectively) in Liedahl's condition. 

\medskip

The authors are greatful to Jack Sonn for his valuable advice and useful comments.

\section{Preliminaries}\label{sec:pre}




%
%
%
%
%
%
%
%
%
%

\subsection{Admissibility and Preadmissibility}\label{ss:pread}

For a prime $\nu$ of a
field $K$, we denote by $K_{\nu}$ the completion of $K$ with
respect to $\nu$. If $L/K$ is a finite Galois extension,
$L_{\nu}$ denotes the completion of $L$ with respect to some
prime divisor of $\nu$ in $L$.

The basic criterion for admissibility over global fields is
due to Schacher:

\begin{thm}[{Schacher, \cite{Sch}}]\label{Schachers criterion}
Let $L/K$ be a finite Galois extension of global fields. Then $L$
is $K$-adequate  if for every rational prime $p$ dividing $\card{G}$,
where $G = \Gal(L/K)$, there is a pair of primes $\nu_1 ,\nu_2$ of
$K$ such that each of $\Gal(L_{\nu_i}/K_{\nu_i})$ contains a
$p$-Sylow subgroup of $G$.
\forget Let $K$ be a global field and $G$ a finite group.
Then $G$ is $K$-admissible if and only if there exists a
Galois $G$-extension $L/K$ such that for every rational
prime $p\divides \card{G}$, there is a pair of primes
$\nu_1 ,\nu_2$ of $K$ such that each of $\Gal(L_{\nu_i}/K_{\nu_i})$
contains a $p$-Sylow subgroup of $G$.
\forgotten
\end{thm}

%
%
%
%

Extracting the necessary local conditions for $K$-admissibility from \Tref{Schachers criterion}, we arrive at the following definition. For a group $G$, $G(p)$ denotes a $p$-Sylow subgroup.
\begin{defn}\label{preadm.def}
Let $K$ be a number field.
The group $G$ is $K$-preadmissible if $G$ is realizable over $K$, and there exists a finite set $S = \set{\nu_i(p) \suchthat p\divides \card{G}, i = 1,2}$ of primes of $K$, and, for each $\nu \in S$, a subgroup $G^\nu \leq G$, such that
\begin{enumerate}
\item $\nu_1(p) \neq \nu_2(p)$,
\item $G^{\nu_i(p)} \supseteq G(p)$ for every $p$ and $i = 1,2$, and
\item \label{preadm.def3}
$G^\nu$ is realizable over $K_\nu$ for every $\nu \in S$.
\end{enumerate}
(Notice that a $p$-group $G$ is $K$-preadmissible if and only if there is a pair of primes $\nu_1$ and $\nu_2$ of $K$, such that $G$ is realizable over $K_{\nu_1}$ and over $K_{\nu_2}$.)
\end{defn}
Clearly, every $K$-admissible group is also $K$-preadmissible.
However the opposite does not always hold (see \cite[Example
2.14]{Nef}).

For an extension of fields $L/K$, $\Br(L/K)$ denotes the kernel of
the restriction map $\res \co \Br(K) \ra \Br(L)$. For number
fields we have the following isomorphism of groups, where $\Pi_K$ is
the set of places of $K$:
\begin{equation*}
\Br(L/K)\cong \(\bigoplus_{\pi\in\Pi_K}
\frac{1}{\gcd_{\pi' | \pi} \dimcol{L_{\pi'}}{K_{\pi}}}\mathbb{Z} /
\mathbb{Z}\)_0,\end{equation*} where $(\,\cdot\,)_0$ denotes that
the sum of invariants is zero.

Over a number field $K$, the exponent of a division algebra is
equal to its degree, and so $L$ is $K$-adequate if and only if there is an
element of order $\dimcol{L}{K}$ in $\Br(L/K)$ \cite[Proposition 2.1]{Sch}.
\subsection{Tame and wild admissibility}\label{ss:tw}

We denote by $\unram{k}$ the maximal unramified extension of a local field $k$, and by $\tame{k}$ the maximal tamely ramified extension.

The tamely ramified subgroup $\tame{\Br(L/K)}$ of $\Br(L/K)$ is
the subgroup of algebras which are split by the tamely ramified part of every
completion of $L$; namely the subgroup corresponding under the above isomorphism to
\begin{equation}
\(\bigoplus_{\pi\in\Pi_K} \frac{1}{\gcd_{\pi'|\pi} \dimcol{L_{\pi'}\cap
\tame{(K_{\pi})}}{K_{\pi}}}\Z / \Z\)_0.\end{equation}

 Following the above local description of adequacy we define:
\begin{defn}
We say that a finite extension $L$ of $K$ is \emph{tamely
$K$-adequate} if there is an element of order $\dimcol{L}{K}$ in
$\tame{\Br(L/K)}$.

{}Likewise, a finite group $G$ is \emph{tamely $K$-admissible}  if
there is a tamely $K$-adequate Galois $G$-extension $L/K$.
\end{defn}

The structure of tamely admissible groups is related to Liedahl presentations:
For $t$ prime to $n$, let $\sigma_{t,n}$ be the automorphism of
$\mQ(\mu_n)/\mQ$ defined by $\sigma_{t,n}(\zeta)=\zeta^t$ for
$\zeta\in\mu_n$.
\begin{defn}[{\cite{Lid2}}]\label{def:LC}
We say that a metacyclic $p$-group has a
\emph{Liedahl presentation} over $K$, if it has a
presentation
\begin{equation}\label{Mdef}
\M(m,n,i,t) := \sg{x,y \subjectto x^m = y^i,\ y^n = 1,\
x^{-1}yx = y^t}
\end{equation}  such that $\s_{t,n}$ fixes $K \cap
\mQ(\mu_n)$.
\end{defn}

\begin{exam}\label{D4}
The dihedral group $D_4$ has a Liedahl presentation over $\mQ$,
but not over $\mQ(\sqrt{-1})$. Thus $D_4$ satisfies the Liedahl
condition over $\mQ$, but not over $\mQ(\sqrt{-1})$.
\end{exam}

The existence of Liedahl's presentation for a $p$-group $G$ over
$K$ implies $G$ is $K$-tame-preadmissible (namely,
\Dref{preadm.def} holds with realizability within $\tame{(K_\nu)}$
in \ref{preadm.def}.(\ref{preadm.def3})\,).
\begin{rem}[{Liedahl, follows directly from \cite[Proofs of theorems~28 and~29]{Lid2}}]\label{interpert_lid.rem}
Let $G$ be a finite group. If $G$ is realizable  
over infinitely many completions of $K$ (at infinitely many
primes), then $G$ has a presentation as above. If $G$ is a $p$-group then the converse also holds. 
In addition a $p$-group is realizable over infinitely many completions of $K$ if and only if it is realizable over a
completion $K_v$ at one prime $v$ that does not divide $p$.
\end{rem}

This allows us to simplify the definition of preadmissibility:
\begin{lem} \label{pread_support.lem}
Let $K$ be a number field. A group $G$ is $K$-preadmissible if and
only if it is realizable over $K$, and there are distinct primes
$v_i(p)$, $p$ runs over the prime dividing $\card{G}$ and $i =
1,2$, such that for every $p$ and $i = 1,2$, there is a subgroup
$H\leq G$ that contains a $p$-Sylow subgroup of $G$ and is
realizable over $K_{v_i(p)}$.
\end{lem}
\begin{proof}
The if part holds by definition.
To prove the only if part let $$T=\set{v_i(p) \subjectto i=1,2, p\divides \card{G} }$$ be a set of primes of $K$ and for every prime $v\in T$  a corresponding subgroup $G^v$ so that:

 1) $v_1(p)\not=v_2(p)$,

 2) $G^v$ is realizable over over $K_v$,

 3) $G^{v_i(p)}$ contains a $p$-Sylow subgroup of $G$,

for every $i=1,2$, $p\divides \card{G}$.  We shall define primes $w_i(p)$, $i=1,2$,$p\divides \card{G}$ such that all primes are distinct and for every $w_i(p)$ there is a subgroup of $G$ that contains a $p$-Sylow subgroup of $G$ and is realizable over $M_{w_i(p)}$.

If $v_i(p),$ divides $p$ define $w_i(p)=v_i(p)$ for any $i=1,2,p\divides \card{G}$. If $v_i(p)$ does not divide $p$ then $G(p)$ is metacyclic and has a Liedahl presentation over $K$ (by \Rref{interpert_lid.rem}).  Thus, there are infinitely many primes $w$ of $M$ for which $G(p)$ is realizable over $M_w$. For all primes $v_i(p)$  that do not divide $p$ (running over both $i$ and $p$) choose distinct primes $w_i(p)$ which are not in $T$ and for which $G(p)$ is realizable over $M_{w_i(p)}$ (such a choice is possible since there are infinitely many such $w$'s). We have chose distinct primes $w_i(p), i=1,2,p\divides\card{G}$ as required.
\end{proof}

\begin{rem}\label{section0 - remark on down-inheritance of Lid cond}
If a $p$-group $G$ has a Liedahl presentation over $M$, then $G$
also has a Liedahl presentation over any subfield $K$ of $M$. 
\end{rem}

\forget
\begin{rem}\label{0.7}
Let $k$ be a local field. Then $\Gal(\tame{k}/k)$ is the
pro-finite completion of $$\sg{x,y \subjectto x^{-1} y x = y^{q}}$$
where $q = \card{\bar{k}}$.
\end{rem}
\forgotten

\begin{thm}[{Liedahl \cite{Lid2}, see also \cite{Nef}}]\label{admissibility implies Liedahl} 
If $G$ is tamely $K$-admissible, then $G(p)$ has a Liedahl
presentation over $K$ for every prime $p$ dividing $\card{G}$.
\end{thm}
 There are no known counterexamples to the opposite
implication. However, the following two results are proved for
$p$-groups in \cite[Theorem~30]{Lid2} and in general in
\cite{Nef}: 

\begin{thm}\label{theorem tame admissibility of solvable groups}
Let $K$ be a number field and let $G$ be a solvable group with
metacyclic Sylow subgroups. Then $G$ is tamely $K$-admissible if
and only if its Sylow subgroups have Liedahl presentations.

\end{thm}


%

\begin{thm}\label{section2-sylow meta cyclic admissibility corollary}
Let $K$ be a number field. Let $G$ be a solvable group such that
the rational primes dividing $\card{G}$ do not decompose in $K$. Then
$G$ is $K$-admissible if and only if its Sylow subgroups are
metacyclic and have Liedahl presentations.
\end{thm}

In particular if a solvable group is tamely $M$-admissible then it
is also tamely $K$-admissible, i.e. tame admissibility has a going
down property for solvable groups. Also, if $G$ is solvable and
any prime $p\divides \card{G}$ satisfies Item (i) in Liedahl's
condition over $M$, i.e. does not decompose in $M$, then $G$ is
$M$-admissible if and only if $G$ is tamely $M$-admissible.

In particular for $M=\mQ$ one has that any solvable group $G$ that is $\mQ$-admissible is tamely $\mQ$-admissible. However over larger number fields this is no longer the case. Let us define wild $K$-admissibility:
%
%
\begin{defn}\label{section2- definition of wild admissiblity}
A $G$-extension $L/K$ is wildly $K$-adequate if $L/K$ is
$K$-adequate and there is a prime $p$ dividing $\card{G}$ such
that every prime $\nu$ of $K$ for which
$$\Gal(L_{\nu}/K_{\nu})\supseteq G(p),$$ divides $p$.
A $K$-admissible group $G$ is called wildly $K$-admissible if
every $K$-adequate $G$-extension is wildly $K$-adequate.
\end{defn}

Clearly a tamely $K$-admissible group is not wildly $K$-admissible. Theorems \ref{theorem tame admissibility of solvable groups} and \ref{section2-sylow meta cyclic admissibility corollary} guarantee
that a solvable group which is $K$-admissible but not wildly, is tamely
$K$-admissible. In particular:
\begin{rem}\label{section0 - necessary condition for wild admissibility}
Every $K$-admissible $p$-group which is not tamely $K$-admissible is wildly $K$-admissible. So, every non-metacyclic $K$-admissible $p$-group is wildly $K$-admissible.
\end{rem}

\subsection{The Grunwald-Neukirch (GN) property}\label{ss:GN}

A group $G$ {\bf has the  GN-property} (named after Grunwald and Neukirch)
over a number field $K$ if for every finite set $S$ of primes of
$K$ and corresponding subgroups $G^\nu\leq G$ for $\nu \in S$, there is
a Galois $G$-extension $L/K$ for which $\Gal(L_\nu/K_\nu)\cong G^\nu$
for every $\nu\in S$.

The Grunwald-Wang Theorem shows that except for special cases (see
\cite{Wan}), abelian groups $A$ have the GN-property over $K$. A
large set of examples comes from a Theorem of Neukirch
\cite[Corollary 2]{Neu}. Let $m(K)$ denote the number of roots of
unity in a number field $K$.
\begin{thm}[{Neukirch, \cite{Neu}}]\label{Neukirch main cor}
Let $K$ be a number field and $G$ a group for which $\card{G}$ is
prime to $m(K)$. Then $G$ has the GN-property over $K$.
\end{thm}

Another important source of examples is having a generic extension
(\cite[Theorem 5.9]{Sal3}):
\begin{thm}[Saltman]\label{generic ext}
If $G$ has a generic extension over a number field $K$ then
$G$ has the GN-property over $K$.\end{thm}

By \cite{Sal2}, if $\mu_p\subseteq K$ then any group of order $p^3$ which is not the cyclic group of order $8$ has a generic extension over $K$.  
In \cite{Sal1}, many groups are proved to have a generic extension over number fields, in particular, any abelian group that does not have an element of order $8$. In \cite{Sal1} it is also proved that the class of groups with a generic extension is closed under wreath products. In particular we have:
\begin{cor}[Saltman]\label{cor_sal} Let $q$ be an odd prime and let $K$ be a number field that contains the $q$-th roots of unity. Then any iterated wreath product of odd order cyclic groups and groups of order $q^3$ has the GN-property over $K$.      \end{cor}

For more examples see \cite{Nef}.
Under the assumption of the GN-property one has the following characterization of wild admissibility:
\begin{lem}\label{wildGN.lem} Let $K$ be a number field and $G$ a $K$-admissible group that has the GN-property over $K$. Then $G$ is wildly $K$-admissible if and only if there is a prime $p_0\divides \card{G}$ for which $G(p_0)$ does not have a Liedahl presentation over $K$.
\end{lem}
\begin{proof} Assume $p_0$ is a prime for which $G(p_0)$ does not have a Liedahl presentation over $K$. Assume on the contrary there is a $K$-adequate $G$-extension $L/K$ such that for every $p\divides \card{G}$ there is a prime $v$ of $K$ that does not divide $p$, with $\Gal(L_v/K_v)\supseteq G(p)$. Then $G(p_0)$ has a Liedahl presentation over $L^{G(p_0)}$ and  by \Rref{section0 - remark on down-inheritance of Lid cond} $G(p_0)$ has a Liedahl presentation over $K$, contradiction.

On the other hand if all Sylow subgroups have Liedahl presentations then by \Rref{interpert_lid.rem} every Sylow subgroup is realizable over infinitely many completions. One can therefore choose distinct primes $\{v_i(p)\subjectto i=1,2, p\divides \card{G}\}$ of $K$ such that $G(p)$ is realizable over $K_{v_i(p)}$ and $ v_i(p)\ndivides p$ for every $p\divides\card{G},i=1,2$. Since $G$ has the GN-property it follows that $G$ is tamely $K$-admissible.
\end{proof}
%
\subsection{Galois groups of local fields}\label{ss::localgal}

Let $k$ be a $p$-adic field of degree $n$ over $\mQ_p$. Let $q$ be the size of the residue field $\overline{k}$, and let $p^s$ be the size of the group of $p$-power roots of unity inside $\tame{k}$. Then
\begin{enumerate}
\item $\Gal(\unram{k}/k)$ is (topologically) generated by an automorphism $\s$, and isomorphic to $\hat{\Z}$;
\item $\Gal(\tame{k}/\unram{k})$ is (topologically) generated by an automorphism $\tau$, isomorphic to $\hat{\Z}^{(p')}$ (which is the complement of $\Z_p$ in $\hat{\Z}$);
\item The group $\Gal(\tame{k}/k)$ is a pro-finite group generated by $\sigma$ (lifting the above mentioned automorphism) and $\tau$, subject to the single relation $\s^{-1} \tau \s =  \tau^q$.
\end{enumerate}
Moreover, $\sigma$ and $\tau$ act on $\mu_{p^s}$ by exponentiation by some $g \in \Z_p$ and $h \in \Z_p$, respectively (Note that $g$ and $h$ are well defined modulo $p^s$).

Let $\overline{G_k(p)}$ denote the Galois group of the maximal $p$-extension of $k$ inside $\sep{k}$ (a separable closure of $k$), over $k$.
Let $s_0$ be the maximal number such that $k$ contains roots of unity of order $p^{s_0}$. Note that if $s_0 > 0$ then $n$ must be even. The following Theorem summarizes results of Shafarevich \cite{Shf}, Demushkin \cite{Dem}, Serre \cite{Ser} and Labute \cite{Lab}:
\begin{thm}[{\cite[Section II.5.6]{Serre}}]\label{SDL}
When $p^{s_0}\neq 2$, $\overline{G_k(p)}$ have the following presentation of pro-$p$
groups:
\begin{equation*}
\overline{G_k(p)}\cong  \left\{ \begin{array}{ll}
\langle x_1,\ldots,x_{n+2} \mid x_1^{p^{s_0}}[x_1,x_2] \cdots [x_{n+1},x_{n+2}]=1 \rangle, & \mbox{if $s_0 > 0$} \\
\langle x_1,\ldots,x_{n+1}  \rangle, & \mbox{if $s_0 = 0$}
\end{array} \right.;
\end{equation*}
When $p^{s_0} = 2$ and $n$ is odd,
$$\overline{G_k(p)}\cong \sg{x_1,\ldots,x_{n+2} \mid x_1^2 x_2^4 [x_2,x_3] \cdots [x_{n+1},x_{n+2}]=1 },$$
otherwise there is an $f\geq 2$ for which $\overline{G_k(p)}$ has one of the pro-$p$ presentations:
$$ \langle x_1,\ldots,x_{n+2} \mid x_1^{2}[x_1,x_2]x_3^{2^f}[x_3,x_4] \cdots [x_{n+1},x_{n+2}]=1 \rangle \mbox{\it , or} $$
$$\langle x_1,\ldots,x_{n+2} \mid x_1^{2+2^f}[x_1,x_2] \cdots [x_{n+1},x_{n+2}]=1 \rangle. $$
\end{thm}

\begin{thm}[Jannsen, Wingberg, {\cite{JW}, see also \cite[Theorem~7.5.10]{NSW}}]\label{JW}
The group $G_k$ has the following presentation (as a profinite group):
\begin{equation*}
G_k =  
\langle \sigma,\pro{\tau}{p'},\pro[N]{x_0,\ldots,x_{n}}{p} \subjectto \tau^\sigma = \tau^{q}, \,x_0^\sigma = \langle x_0,\tau \rangle^g x_1^{p^s}[x_1,x_2]\cdots [x_{n-1},x_{n}]\rangle 
\end{equation*} if $n$ is even, and
\begin{equation*}
G_k = \langle \sigma,\pro{\tau}{p'},\pro[N]{x_0,\ldots,x_{n}}{p} \subjectto \tau^\sigma =
\tau^{q}, \,x_0^\sigma = \langle x_0,\tau \rangle^{g}
x_1^{p^{s}}[x_1,y_1][x_2,x_3]\cdots [x_{n-1},x_{n}]\rangle 
\end{equation*}
if $n$ is odd, where $\pro{\tau}{p'}$ denotes that $\tau$ is a pro-$p'$ element (has order prime to $p$ in every finite quotient), and $\pro[N]{x_0,\dots,x_n}{p}$ denotes the condition that the closed normal subgroup generated by $x_0,...,x_n$ is required to be a pro-$p$ group.  Here, the closed subgroup generated by $\s$ and $\tau$ is isomorphic to $\Gal(\tame{k}/k)$. 
The notation $\langle x_0,\tau \rangle$ stands for $(x_0^{h^{p-1}} \tau x_0^{h^{p-2}} \tau \cdots x_0^h \tau)^{\frac{\pi_p}{p-1}}$, where $\pi_p \in \hat{\Z}$ is an element such that $\pi_p \hat{\Z} = \Z_p$. Also, $y_1$ is a multiple of $x_1^{\tau^{\pi_2(p+1)}}$ by an element in the maximal pro-$p$ quotient of the pro-finite group generated by $x_1,\sigma^{\pi_2}$ and $\tau^{\pi_2}$. In particular, in every pro-odd quotient of $G_k$, $[x_1,y_1]$ is trivial.
\end{thm}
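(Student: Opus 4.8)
This is the Jannsen--Wingberg presentation of the absolute Galois group of a $p$-adic field, and any proof must reconstruct it from the structural data recalled just above. The plan is to realize $G_k$ as assembled from two known pieces -- its tame quotient $\Gal(\tame{k}/k)$ and its wild (maximal pro-$p$) part -- and then to promote the resulting surjection from an abstractly presented group onto $G_k$ to an isomorphism by a cohomological comparison. I would therefore first fix the generators $\sigma,\tau$ lifting the tame structure together with its relation $\tau^\sigma=\tau^q$, and the generators $x_0,\dots,x_n$ for the pro-$p$ part; the topological generator rank here is read off from $H^1(G_k,-)$, which local class field theory computes via $G_k^{\mathrm{ab}}\cong\widehat{k^\times}$ in terms of $n$ and of whether $k$ contains $p$-power roots of unity.

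The wild part is a Demushkin group, and its one-relator structure is exactly \Tref{SDL}: this is what supplies the word $x_1^{p^s}[x_1,x_2]\cdots[x_{n-1},x_n]$ appearing on the right-hand side of the main relation, with the exponent $p^s$ governed by the $p$-power roots of unity in $\tame{k}$ and the parity of $n$ dictating whether the correction term $[x_1,y_1]$ is present. The next step is to compute how the tame generators act on this pro-$p$ part. Since $\sigma$ lifts Frobenius and $\tau$ generates tame inertia, their conjugation action on wild inertia is determined, up to the cyclotomic twist, by the exponents $g,h\in\Z_p$ by which $\sigma,\tau$ act on $\mu_{p^s}$; assembling this action into a single conjugation relation for $x_0$ produces $x_0^\sigma=\sg{x_0,\tau}^g x_1^{p^s}[x_1,x_2]\cdots[x_{n-1},x_n]$, the symbol $\sg{x_0,\tau}$ encoding precisely the Frobenius twist of the Demushkin relation.

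The main obstacle -- and the technical heart of the argument -- is completeness: showing that the abstract profinite group $G$ defined by the displayed generators and relations maps \emph{isomorphically} onto $G_k$, rather than merely surjecting. Checking that the chosen images satisfy all stated relations gives a surjection $G\twoheadrightarrow G_k$; to invert it I would invoke local Tate duality, which yields $\operatorname{cd}_p(G_k)=2$, exhibits $G_k$ as a duality group with dualizing module $\mu_{p^\infty}$, and pins down $H^2(G_k,\F_p)$ (dual to the $p$-torsion roots of unity in $k$). One then verifies that the relation module of the presentation has the correct rank -- equivalently that the induced map on $H^2$ is an isomorphism -- so that a surjection between groups of cohomological dimension $2$ inducing isomorphisms on $H^1$ and $H^2$ is forced to be an isomorphism. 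The genuinely delicate points are pinning down the exact twist exponent $g$ and handling the case of odd $n$ together with the prime $2$, where the presentation acquires the $[x_1,y_1]$ term; for the applications in this paper only pro-odd quotients intervene, and there $[x_1,y_1]$ vanishes, as the statement already records.
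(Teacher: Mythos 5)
The first thing to note is that the paper contains no proof of \Tref{JW} at all: the theorem is imported by citation from Jannsen--Wingberg \cite{JW} (see also \cite[Theorem 7.5.10]{NSW}) and is used downstream as a black box. So there is no argument in the text to compare yours against; the only question is whether your sketch would stand on its own. Its first two paragraphs do correctly identify where the ingredients of the statement come from --- the tame relation $\tau^\sigma=\tau^q$ from $\Gal(\tame{k}/k)$, the word $x_1^{p^s}[x_1,x_2]\cdots[x_{n-1},x_n]$ from the Demushkin structure in \Tref{SDL}, and the exponents $g,h$ from the action on $p$-power roots of unity --- and this matches the coarse architecture of the literature proof. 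One imprecision worth flagging: the pro-$p$ subgroup of $G_k$ topologically generated by $x_0,\dots,x_n$ is neither the wild inertia subgroup (which is free pro-$p$ of infinite rank) nor the maximal pro-$p$ quotient $\overline{G_k(p)}$; the Demushkin group of \Tref{SDL} is a \emph{quotient} of $G_k$, and the relation of \Tref{SDL} is only recovered from the displayed relation after passing to that quotient.

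The genuine gap is at what you yourself call the technical heart. Your plan to promote the surjection $G\twoheadrightarrow G_k$ to an isomorphism rests on the criterion that a surjection between profinite groups of cohomological dimension $2$ inducing isomorphisms on $H^1$ and $H^2$ (with $\F_p$-coefficients, as supplied by local Tate duality) must be an isomorphism. That criterion is a pro-$p$ fact: for finitely generated pro-$p$ groups, $H^1(-,\F_p)$ counts generators, $H^2(-,\F_p)$ counts relations, and a Frattini argument applies. But $G_k$ is not pro-$p$: its tame quotient $\hat{\Z}^{(p')}\rtimes\hat{\Z}$ carries all of the prime-to-$p$ structure, and mod-$p$ cohomology is blind to it. Concretely, the projection $\hat{\Z}\twoheadrightarrow\Z_p$ induces isomorphisms on $H^i(-,\F_p)$ for all $i$ yet has kernel $\prod_{\ell\neq p}\Z_\ell$; the same phenomenon means your criterion could ``verify'' a presentation that collapses the prime-to-$p$ part of $\tau$. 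Running the comparison at all primes $\ell$ does not rescue the step, since cohomological detection of isomorphisms fails for general profinite groups. This is exactly why the actual proof in \cite{JW} and \cite[Ch.~VII, \S 5]{NSW} is long: it requires Iwasawa's description of the tame quotient, the Demushkin structure of the maximal pro-$p$ quotients of \emph{all} open subgroups (a Demushkin formation), and a delicate lifting and characterization argument --- not an $H^1/H^2$ count. As written, your sketch makes the presentation plausible but does not prove it.
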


\begin{rem}\label{local_finite_quotients.rem}
Notice that $G_k$ is a semidirect product of a pro-$p$ group $P_k$
and a profinite metacyclic group $D_k$, where $P_k$ is the closed
normal subgroup generated by $x_0,\dots,x_n$ and  $D_k$ is the
closed subgroup generated by $\sigma$ and $\tau$. The $p$-Sylow
subgroup of $G_k$ is therefore the pro-$p$ closure of
$\langle\sigma^{\pi_p}\rangle \cdot P_k$.
\end{rem}

\begin{rem}
If $G$ is admissible over a number field $K$, then for every $p$
there is a subgroup $H\supseteq G(p)$ which is realizable over a
completion of $K$. In particular, $H$ is a product of a metacyclic
group and a normal $p$-subgroup.
\end{rem}

The following result on realizability of metacyclic $p$-groups
will be used in \Sref{sec3}.
\begin{lem}\label{5.11}
Let $k$ be a $p$-adic field. Then any metacyclic $p$-group
$G$ is realizable over $k$.
\end{lem}
\begin{proof}
Let $G=\M(m,n,i,t)$ (see {\emph{\eq{Mdef}}}). The proof for $k \neq \mQ_2$ is in \cite{Nef2}. For $k = \mQ_2$ we
cover $2$-groups, so $m$ and $n$ are $2$-powers and $t$ is odd. In
this case $\overline{G_k(2)}$ has the pro-$2$ presentation
$\sg{a,b,c \subjectto a^2b^4[b,c] = 1}$ (by \Tref{SDL}), i.e.
$\overline{G_k(2)}$ is isomorphic to the free pro-$2$ group on
three generator modulo the normal closure of the single relation.
So the map $\phi: \overline{G_k(2)}\ra G$ defined by:
$$a \mapsto x^{-2} y^s,\ b \mapsto x,\ c \mapsto y,$$
is well defined (and surjective) whenever $(x^{-2}y^s)^2x^4[x,y]=1$.

As $t$ is odd, $t^2+1\equiv 2$ (mod $4$), $\frac{t^2+1}{2}$ is odd
and we can choose an $s \equiv
\frac{1}{\frac{t^2+1}{2}}\frac{1}{t^2}\frac{1-t}{2}$ (mod $n$) so
that $s(t^2+1)\equiv \frac{1-t}{t^2}$ (mod $n$). For such  $s$ one
has:
$$(x^{-2}y^s)^2x^4[x,y] = x^{-4}y^{st^{-2}+s}x^4y^{t-1}  =y^{s(t^{-2}+1)t^4+t-1}=1. $$ Thus $\phi$ is well defined.
\end{proof}

\section{$p$-groups}\label{sec2}

A nilpotent group $G$ is $K$-admissible if and only if all Sylow
subgroups of $G$ are $K$-admissible. In particular studying the
behavior of the admissibility of $G$ under extension of number
fields is reduced to understanding the behavior of its Sylow
subgroups.

\subsection{The case $p$ odd}\label{ss:podd}
We begin by proving the observation on realizability over
extensions of $p$-adic fields, $p$ odd.
\begin{proof}[Proof of \Pref{prop 1}]
Let $n$ denote the rank $\dimcol{k}{\mQ_p}$ and let
$t=\dimcol{m}{k}$. If $t=1$ there is nothing to prove. For $t=2$,
$(\dimcol{m}{k},p)=1$ and hence from a $G$-extension $l/k$ we can
form a $G$-extension $lm/m$. Now let $t>2$. 
It suffice to show that
$\overline{G_k(p)}$ is a quotient of $\overline{G_m(p)}$.

By \Tref{SDL}, $\overline{G_k(p)}$ and $\overline{G_m(p)}$ have the following presentations of pro-$p$
groups:
\begin{equation*}
\overline{G_k(p)}\cong  \left\{ \begin{array}{ll}
\langle x_1,\ldots,x_{n+2} \mid x_1^{p^{s_0}}[x_1,x_2] \cdots [x_{n+1},x_{n+2}]=1 \rangle, & \mbox{if $s_0 > 0$} \\
\langle x_1,\ldots,x_{n+1}  \rangle, & \mbox{if $s_0 = 0$}
\end{array} \right.,
\end{equation*}
and
\begin{equation*}
\overline{G_m(p)}\cong  \left\{ \begin{array}{ll}
\langle x_1,\ldots,x_{nt+2} \mid x_1^{p^{s_0'}}[x_1,x_2] \cdots [x_{nt+1},x_{nt+2}]=1 \rangle, & \mbox{if $s_0' > 0$} \\
\langle x_1,\ldots,x_{nt+1}  \rangle, & \mbox{if $s_0' = 0$}
\end{array} \right.,
\end{equation*}
where $p^{s_0}$ and $p^{s_0'}$ are the numbers of $p$-power roots
of unity in $k$ and $m$, respectively. Clearly $s_0 \leq s_0'$. Let $F_p(y_1,\ldots,y_k)$ denote the free
pro-$p$ group of rank $k$ with generators $y_1,\ldots,y_k$.  If
$s_0' = 0$ then we are done since $F_p(x_1,\ldots,x_{n+1})$ is a
quotient of $F_p({x_1,\ldots,x_{nt+1}})$.

Suppose $s_0' > 0$.
Let $\phi \co \overline{G_m(p)} \ra
F_p(y_1,\ldots,y_{\frac{nt+2}{2}})$ be the epimorphism defined by $\phi(x_{2i-1})=1$
and $\phi(x_{2i})=y_{i}$, $i = 1,\dots,\frac{nt+2}{2}$. Now as $t>2$ we have:
\begin{equation*} \frac{nt+2}{2}=n\frac{t}{2}+1\geq n+2
\end{equation*} and hence there is a projection $\pi$:
\begin{equation*}
\pi \co F_p(y_1,\ldots,y_{\frac{nt+2}{2}}) \ra \overline{G_k(p)}.
\end{equation*}
Thus $\pi\circ\phi \co \overline{G_m(p)}\ra \overline{G_k(p)}$ is an epimorphism. We deduce that every epimorphic image of $\overline{G_k(p)}$ is also an epimorphic image of $\overline{G_m(p)}$.
\end{proof}
%

We can now prove \Pref{intro-admiss of 2-gp}. It suffices
to prove:
\begin{prop}\label{odd-p-up.prop}
Let $M/K$ be an extension of number fields. Let $p$ be an odd
prime and $G$ a $p$-group that is $K$-admissible and has the
GN-property over $M$. If $G$ satisfies Liedahl's condition over
$M$, then $G$ is $M$-admissible.
\end{prop}
\begin{proof}
As $G$ is $K$-admissible, $G$ is realizable over $K_{v_1},K_{v_2}$
for two primes $v_1,v_2$ of $K$. We claim there are two primes
$w_1,w_2$ of $M$ for which $G$ is realizable over
$M_{w_1},M_{w_2}$. Since $G$ has the GN-property over $M$ proving
the claim shows $G$ is $M$-admissible. There are two cases:

Case I: $p$ decomposes in $M$.  If one of the primes $v_1,v_2$
does not divide $p$, then $G$ is metacyclic and hence by
\Lref{5.11}, $G$ is realizable over any $M_{w_1},M_{w_2}$ for any
two primes $w_1,w_2$ of $M$ that divide $p$. If on the other hand
both $v_1,v_2$ divide $p$ then by \Pref{prop 1}, $G$ is realizable
over $M_{w_i}$ for $w_i|v_i$, $i=1,2$,

Case II: $p$ does not decompose in $M$. Since $G$ satisfies
Liedahl's condition over $M$, $G$ has a Liedahl presentation over
$M$. In particular by \Tref{section2-sylow meta cyclic
admissibility corollary}, $G$ is $M$-admissible.
\end{proof}
As a corollary we deduce that wild admissibility goes up for
$p$-groups:
\begin{cor}\label{section2- prop on wild admissiblity of p-groups}
Let $p$ be an odd prime. Let $M/K$ be an extension of number
fields and $G$ a wildly $K$-admissible $p$-group that has the
GN-property over $M$. Then $G$ is also wildly $M$-admissible.
\end{cor}
\begin{proof}
Since $G$ is wildly $K$-admissible, $p$ decomposes in $K$ and
hence in $M$. Thus $G$ satisfies Liedahl's condition over $M$ and
by \Pref{odd-p-up.prop} $G$ is $M$-admissible. By remarks
\ref{interpert_lid.rem} and \ref{section0 - necessary condition for wild admissibility}, the
wild $K$-admissibility of $G$ implies that $G$ is not realizable
over $K_\nu$ for any prime $\nu$ which does not divide $p$. By
Remark \ref{section0 - remark on down-inheritance of Lid cond},
$G$ is also not realizable over $M_w$ for any prime $\varpi$ of
$M$ which does not divide $p$. Therefore an $M$-adequate
$G$-extension must also be wildly $M$-adequate and hence $G$ is
wildly $M$-admissible.
\end{proof}
Apply Theorem \ref{Neukirch main cor}, we have:
\begin{cor}
Let $p$ be an odd prime. Let $M/K$ be an extension of number
fields so that $M$ does not contain the $p$-th roots of unity. Let
$G$ be a $p$-group that is wildly $K$-admissible. Then $G$ is
wildly $M$-admissible.
\end{cor}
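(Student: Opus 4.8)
The plan is to obtain this as an immediate consequence of \Pref{section2- prop on wild admissiblity of p-groups} together with Neukirch's theorem \Tref{Neukirch main cor}. The proposition already establishes that an odd wildly $K$-admissible $p$-group is wildly $M$-admissible as soon as it has the GN-property over $M$, so all that remains is to check that the hypothesis on roots of unity supplies exactly this GN-property over $M$.

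First I would translate the assumption that $M$ does not contain the $p$-th roots of unity into the numerical condition $p \nmid m(M)$, where $m(M)$ is the number of roots of unity in $M$. This is immediate: the group of roots of unity of $M$ is finite cyclic, and $p$ divides its order precisely when $M$ contains a primitive $p$-th root of unity, that is, when $\mu_p \subset M$. Hence $\mu_p \not\subset M$ is equivalent to $p \nmid m(M)$.

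Next, since $G$ is a $p$-group, $\card{G}$ is a power of $p$ and is therefore prime to $m(M)$ by the previous step. Applying \Tref{Neukirch main cor} with base field $M$ then gives that $G$ has the GN-property over $M$. Feeding this into \Pref{section2- prop on wild admissiblity of p-groups}, together with the standing hypotheses that $p$ is odd and that $G$ is wildly $K$-admissible, yields wild $M$-admissibility of $G$, as desired.

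I do not expect any genuine obstacle here: the entire arithmetic and admissibility content is carried by the proposition and by Neukirch's theorem. The only point worth recording carefully is the equivalence $\mu_p \not\subset M \Longleftrightarrow p \nmid m(M)$, which is precisely what allows the clean root-of-unity hypothesis to replace the more technical GN-property assumption appearing in the proposition.
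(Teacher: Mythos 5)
Your proposal is correct and coincides with the paper's intended argument: the corollary is stated there precisely as the combination of Theorem~\ref{Neukirch main cor} (applied over $M$, using that $\card{G}$ is a $p$-power and $\mu_p \not\subset M$ gives $(\card{G},m(M))=1$) with Proposition~\ref{section2- prop on wild admissiblity of p-groups}. Your explicit verification of the equivalence $\mu_p \not\subset M \Leftrightarrow p \nmid m(M)$ is the only step the paper leaves implicit, and it is handled correctly.
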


\subsection{The case $p = 2$}

As in \Pref{prop 1} we have:
\begin{lem}
Let $m/k$ is a finite extension of $2$-adic fields, which is
either of degree greater than $2$, or such that $m$ and $k$
contain $\sqrt{-1}$ and have the same $2$-power roots of unity.
Then any $2$-group realizable over $k$ is also realizable over $m$.
\end{lem}
\begin{proof}
If $\dimcol{m}{k} > 2$, the same proof as of \Pref{prop 1} holds
in all cases of \Tref{SDL}. If $\sqrt{-1}\in k,$ and $k$ and $m$
have the same number of $2$-power roots of unity then
$\oline{G_k(p)}$ and $\oline{G_m(p)}$ have the same type of
presentations in \Tref{SDL} and one can obtain an epimorphism:
$\oline{G_m(p)}\ra \oline{G_k(p)}$ simply by dividing by the
redundant generators of $\oline{G_m(p)}$. \end{proof}

However, we show that \Pref{prop 1} may fail for $p = 2$.
%
We begin with some group-theoretic preparations.

\begin{lem}\label{noQ}
The group \begin{eqnarray*} G & = & \left<a_1,a_2,a_3 \subjectto \mbox{$G'$ is central of exponent $2$}, \quad a_1^2 = [a_2,a_3],\
\,a_2^2 = a_3^2 = 1\right>.\end{eqnarray*}
is not a quotient of the pro-$p$ group
$$\Gamma = \sg{x_1,\dots,x_{4} \subjectto
x_1^{4}[x_1,x_2][x_3,x_4] = 1}.$$
\end{lem}
\begin{proof}
For $j, k = 1,2,3$, write $\alpha_{jk}=[a_j,a_k] \in G$.
Suppose $x_i \mapsto a_1^{t_{i,1}} a_2^{t_{i,2}} a_{3}^{t_{i,3}}z_i$ ($i = 1,\dots,4$) is an epimorphism $\Gamma \ra G$, where $z_i \in G'$. %
Then
$[x_{2i-1},x_{2i}] \mapsto \prod_{j,k = 1}^{3}
[a_j^{t_{2i-1,j}},a_k^{t_{2i,k}}] = \prod_{1 \leq k < j \leq 3}\alpha_{jk}^{t_{2i-1,j}t_{2i,k} -
t_{2i-1,k}t_{2i,j}}$. %
Since $\exp(G) = 4$, the defining relation of $\Gamma$ translates to
$$\prod_{i=1}^{2}\prod_{1 \leq k < j \leq 3} \alpha_{jk}^{t_{2i-1,j}t_{2i,k} - t_{2i-1,k}t_{2i,j}} = 1,$$
from which it follows that $t_{1,j}t_{2,k} -
t_{1,k}t_{2,j} + t_{3,j}t_{4,k} -
t_{3,k}t_{4,j} \equiv 0 \pmod{2}$ for every $1 \leq k < j
\leq 3$.

Let $V$ denote the vector space $\F_2^{4}$, endowed with the bilinear form $b \co V \times V \ra \F_2$ defined by
$b((v_i)_{i=1}^4,(v'_i)_{i=1}^4) = v_{1}v'_{2} - v_{2}v'_{1} + v_{3}v'_{4} - v_{4}v'_{3}$.
This is an alternating non-degenerate form (in fact, hyperbolic),
and letting $t^j \in V$ be the vectors $t^{j}_i = t_{i,j}$, we
have that $b(t^j,t^k) = 0$ for every $j,k = 1,2,3$. It follows that $T = \vspan\set{t^1,t^{2},t^{3}} \sub V$ is orthogonal to
itself. But then $\dim T \leq \frac{1}{2}\dim V = 2$, contradicting the assumption that  the induced map $\Gamma
\ra G/G'G^2 = C_2^3$ is surjective.
\end{proof}

\begin{cor}\label{G is not realizable over Q_2(i)}
There is a group of order $2^6$ which is realizable over $k = \mQ_2$ but not over $m = \mQ_2(\sqrt{-1})$.
\end{cor}
\begin{proof}
As before, we construct a quotient of $\oline{G_k(2)}$ which is not a quotient of $\oline{G_m(2)}$.
Let $G$ and $\Gamma$ be as in \Lref{noQ}. By \Tref{SDL}, $\oline{G_m(2)} \isom
\Gamma$ and $$\oline{G_k(2)} = \sg{x_1,x_2,x_3 \subjectto x_1^2x_2^4[x_2,x_3] = 1}.$$
Mapping $x_i \mapsto a_i$ projects $\oline{G_k(2)}$ onto $G$, which is not a quotient of $\Gamma$.
\end{proof}
It seems that $2^6$ is the minimal possible order for such a $2$-group.

\begin{rem}
Let $m/k$ be an extension of local fields. If there is one $2$-group which is realizable over $k$ but not over $m$, then there are infinitely many such groups. Indeed, let $G$ be such a group, and let $k'$ be a $G$-Galois extension of $k$; the Galois group of any $2$-extension of $k'$ which is Galois over $k$ has $G$ as a quotient, and so is not realizable over $m$.
\end{rem}

Let us apply this example to construct an extension of number fields
$M/K$ for which the group $G$ is wildly $K$-admissible but not $M$-admissible and not even $M$-preadmissible.

Let $p$ and $q$ be two primes for which:

1) $p\equiv 5$ (mod $8$)

2) $q\equiv 1$ (mod $8$)

3) $q$ is not a square mod $p$.

\begin{prop}\label{3.8}
Let $K=\mQ(\sqrt{q})$ and $M=K(i)$. Then $G$ is wildly $K$-admissible but not $M$-preadmissible.
\end{prop}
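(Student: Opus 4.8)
The plan is to treat the two assertions separately and, for the positive one, to reduce wild admissibility to ordinary admissibility. First I would record that $G$ is not metacyclic: since $[G,G]=\sg{\alpha,\beta,\gamma}$ and all squares lie in $\Zent{G}$, the Frattini quotient $G/\Phi(G)$ surjects onto $G/\Zent{G}\cong(\Z/2\Z)^3$, so $G$ needs three generators. Hence, by \Rref{section0 - necessary condition for wild admissibility}.(2), once $G$ is shown to be $K$-admissible it is automatically \emph{wildly} $K$-admissible, and the positive task becomes purely to exhibit a $K$-adequate $G$-extension.

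Next I would carry out the local bookkeeping forced by the congruences, as this feeds both halves. Since $q\equiv 1\pmod 8$, $q$ is a square in $\mQ_2$, so $2$ splits in $K=\mQ(\sqrt q)$ into two primes $\nu_1,\nu_2$ with $K_{\nu_i}\cong\mQ_2$; moreover $M\otimes_\mQ\mQ_2\cong\mQ_2(i)\times\mQ_2(i)$, so both primes of $M$ above $2$ have completion $l=\mQ_2(i)$. Since $p\equiv 5\pmod 8$ we have $p\equiv 1\pmod 4$, whence $i\in\mQ_p$, and since $q$ is a non-residue modulo $p$ the prime $p$ is inert in $K$, with $K_{\mathfrak{p}}\cong\mQ_{p^2}$; this $\mathfrak{p}$ is earmarked as the auxiliary prime for the global construction.

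For $K$-admissibility I would construct a $G$-extension $L_0/\mQ$, ramified only within $\{2,p,\infty\}$ (in particular unramified at $q$, hence $L_0\cap K=\mQ$ because $K/\mQ$ ramifies at $q$), whose decomposition group at $2$ is all of $G$; this is consistent because $G$ is a quotient of $\overline{G_{\mQ_2}(2)}$, as shown above, so $G$ is realizable over $\mQ_2$. Setting $L=L_0K$, disjointness makes $L/K$ a $G$-extension, and as $2$ splits with $K_{\nu_i}\cong\mQ_2$, both completions $L_{\nu_i}/K_{\nu_i}$ carry the full group $G$; since $2$ is the only prime dividing $\card{G}$, \Tref{Schachers criterion} then gives $K$-admissibility. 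The main obstacle is exactly this global realization with prescribed full local extension at $2$: because $G$ is a $2$-group and $\mQ$ is special for $2$-power degree, one is in the Grunwald--Wang special case, and I would resolve the resulting chain of embedding problems (with central $\Z/2\Z$-kernels) by using $\mathfrak{p}$ as the escape prime, the conditions $p\equiv 5\pmod 8$ and $\left(\frac{q}{p}\right)=-1$ being chosen precisely so that the local condition at $p$ may be prescribed freely while the global reciprocity (Brauer-group) obstruction cancels.

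Finally, for the failure of $M$-preadmissibility I would show that $G$ is realizable over \emph{no} completion of $M$. For a prime $\varpi\nmid 2$ the group $\overline{G_{M_\varpi}(2)}$ is topologically generated by a Frobenius and a tame generator, hence is $2$-generated, so the $3$-generated group $G$ cannot be a continuous quotient. For $\varpi\mid 2$ the completion is $l=\mQ_2(i)$, over which $G$ is not realizable by \Pref{G is not realizable over Q_2(i)}. Since by \Dref{preadm.def} a $2$-group is $M$-preadmissible only if it is realizable over at least two completions of $M$, $G$ is not $M$-preadmissible. This half needs only $q\equiv 1\pmod 8$ (to pin the $2$-adic completions of $M$ to $l$) and is otherwise robust.
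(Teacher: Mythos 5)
Your negative half and your reduction of wildness to plain admissibility are correct and agree with the paper: $G$ requires three generators, hence is not metacyclic, so any $K$-admissibility is automatically wild and $G$ can only be realized over completions at primes dividing $2$; since $q\equiv 1\pmod 8$ pins every completion of $M$ above $2$ to $\mQ_2(i)$, \Pref{G is not realizable over Q_2(i)} rules out $M$-preadmissibility.

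The positive half, however, hinges on a step that is provably impossible, not merely delicate: there is \emph{no} $G$-extension $L_0/\mQ$ whose decomposition group at $2$ is all of $G$. Indeed, $G$ has abelianization $\Z/2\Z\times(\Z/8\Z)^2$, which is precisely the Galois group of the maximal abelian extension of $\mQ_2$ of exponent dividing $8$; passing to this quotient, a $G$-extension of $\mQ$ full at $2$ would yield an abelian $\Z/2\Z\times(\Z/8\Z)^2$-extension of $\mQ$ full at $2$, and the fixed field of its inertia group at $2$ would be a $\Z/8\Z$-extension of $\mQ$ in which $2$ is inert --- exactly what Wang's counterexample to Grunwald's theorem forbids. (The paper makes this very observation in the remark following its proof.) This is why no choice of ``escape prime'' $\fp$ can rescue your construction: the obstruction is not an embedding-problem obstruction that can be shifted to an auxiliary place; the required extension of $\mQ$ simply does not exist, and in fact your outline, if it worked, would prove the stronger ``defined over $\mQ$'' statement that the remark refutes. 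The paper's proof therefore never works over $\mQ$: it works over $K$, where $2$ splits, taking $L_0=K(\mu_8,\sqrt p)$, a $(\Z/2\Z)^3$-extension realizing $G/\Zent{G}$ which is full at both primes $\mathfrak{q}_1,\mathfrak{q}_2$ above $2$, and then solves the central embedding problem $1\to \Zent{G}\to G\to G/\Zent{G}\to 1$ via Neukirch's theorems: local solvability holds at $\mathfrak{q}_i$ because $G$ is realizable over $\mQ_2$, and at $\fp$ because $N(\fp)=p^2\equiv 1\pmod 8$ provides a totally ramified $C_8$-extension of $K_\fp$ into which the ramified quadratic $(L_0)_\fp$ embeds. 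Crucially, the kernel $\Zent{G}\cong(\Z/2\Z)^3\times(\Z/4\Z)^2$ has exponent $4$, so Neukirch's theorem permits prescribing the local solutions at $\mathfrak{q}_1,\mathfrak{q}_2$, giving full local groups $G$ there and hence $K$-adequacy. Any repair of your argument must be run over $K$ in essentially this way.
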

\begin{proof}
Since $G$ is a $2$-group that is not metacyclic it is realizable only over completions at primes dividing $2$. In particular if $G$ is $K$-admissible then $G$ is wildly $K$-admissible. As  $2$ splits in $K$, any (of the two) prime divisor $v$ of $2$ in $M$ has a completion $M_{v}\cong \mQ_2(i)$. By \Cref{G is not realizable over Q_2(i)},  $G$ is not realizable over $\mQ_2(i)$ and hence not $M$-preadmissible. It therefore remains to show $G$ is $K$-admissible.

The rational prime $p$ is inert in $K$. Let $\fp$ be the unique prime of $K$ that divides $p$. We have $N(\fp):=\card{\overline{K_\fp}}=p^2\equiv 1$ (mod $8$). Thus, $K_\fp$ has a totally ramified $C_8$-extension. Let $\frak{q}_1,\frak{q}_2$ be the two primes of $K$ dividing $2$.

Consider the field extension $L_0=K(\mu_8,\sqrt{p})/K$. It has a Galois group $$\Gal(L_0/K)\cong C_2^3\cong G/Z(G).$$ This extension is ramified only at $\frak{q}_1,\frak{q}_2$ and $\fp$. As $K_{\frak{q}_i}\cong \mQ_2$ and $\Gal((L_0)_{\frak{q}_i}/K_{\frak{q}_i})\cong C_2^3$, $(L_0)_{\frak{q}_i}/K_{\frak{q}_i}$ is the maximal abelian extension of $K_{\frak{q}_i}$ of exponent $2$, for $i=1,2$. Note that $N(\fp)\equiv 1$ (mod $8$) and hence $K_\fp$ contains $\mu_8$. 

Let us show that the central embedding problem
\begin{equation}\label{global embedding problem for 2-group}
\xymatrix{     & & & G_{K} \ar[d] \ar@{-->}[dl] \\
              0 \ar[r] & Z(G)\cong  C_2^3 \ar[r] & G \ar[r] & G/Z(G)\cong C_2^3 \ar[r] & 0, } \end{equation}

has a solution. Let $\pi$ denote the epimorphism $G\ra G/Z(G)$. By theorems 2.2 and 4.7 in \cite{Neu2}, there is a global solution to Problem \ref{global embedding problem for 2-group} if and only if there is a local solution at every prime of $K$. There is always a solution at primes of $K$ which are unramified in $L_0$ so it suffices to find solutions at $\fp,\frak{q}_1,\frak{q}_2$. Any $G$-extension of  $K_{\frak{q}_i}$ contains $(L_0)_{\frak{q}_i}$ (as it is the unique $C_2^3$ extensions of $\mQ_2$), $i=1,2$. Since $G$ is realizable over $\mQ_2$ we deduce that the induced local embedding problem
\begin{equation}\label{full local embedding problem for 2-group}
\xymatrix{       & G_{K_{\frak{q}_i}} \ar[d] \ar@{-->}[dl] & \\
                \pi^{-1}(\Gal((L_0)_{\frak{q}_i}/K_{\frak{q}_i}))=G \ar[r] & \Gal((L_0)_{\frak{q}_i}/K_{\frak{q}_i})\cong G/Z(G)\cong C_2^3 \ar[r] & 0, } \end{equation}
has a solution for $i=1,2$. Since $(L_0)_\fp$ is the ramified $C_2$-extension of $K_\fp$, it can be embedded into the totally ramified $C_4$-extension and hence the local embedding problem at $\fp$ has a solution.

Therefore, Embedding problem \ref{global embedding problem for 2-group} has a solution. Let $L$ be the corresponding solution field. As Problem \ref{global embedding problem for 2-group} is a Frattini embedding problem such a solution must be surjective globally and at $\{\frak{q}_1,\frak{q}_2\}$. Thus, $L_0/K$ can be embedded in a Galois $G$-extension $L/K$ for which $\Gal(L_{\frak{q}_i}/K_{\frak{q}_i})\cong G$, for $i=1,2$. The field $L$ is clearly $K$-adequate and hence $G$ is $K$-admissible. 
\end{proof}

\section{Realizability under extension of local fields}\label{sec:local}

Realizability of a group $G$ as a Galois group over a field $k$ is clearly a necessary condition for $k$-admissibility. When $k$ is a local field, the conditions are equivalent since a division algebra of index $n$ is split by every extension of degree $n$.

In this section we study realizability of groups under field extensions, assuming the fields are local.

\subsection{Totally ramified extensions}

We first note what happens under prime to $p$ local extensions:

\begin{lem}\label{section3 - lemma on prime to p extensions}
Let $G_1$  be a subgroup of $G$ that contains a $p$-Sylow subgroup of $G$ and is realizable over the $p$-adic field $k$. Let $m/k$ be a finite extension for which $(\dimcol{m}{k},p)=1$. Then there is a subgroup $G_2\leq G_1$ that contains a $p$-Sylow subgroup of $G$ and is realizable over $m$.
\end{lem}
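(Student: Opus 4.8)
The plan is to exploit the theory of natural irrationalities: starting from a realization of $G_1$ over $k$, I would pass to a realization of a suitable subgroup over $l$ by taking a compositum with $l$.

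First, since $G_1$ is realizable over $k$, I would fix a Galois extension $F/k$ with $\Gal(F/k)\cong G_1$, chosen inside a fixed separable closure of $k$ that also contains $l$. Forming the compositum $Fl$, the standard theorem on natural irrationalities gives that $Fl/l$ is Galois and that restriction induces an isomorphism $\Gal(Fl/l)\cong\Gal(F/F\cap l)$. I would then set $G_2:=\Gal(F/F\cap l)$, regarded as a subgroup of $\Gal(F/k)=G_1$ via the Galois correspondence (here $k\subseteq F\cap l\subseteq F$). By construction $G_2$ is realizable over $l$, witnessed by the extension $Fl/l$.

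It remains to verify that $G_2$ contains a $p$-Sylow subgroup of $G$. The index satisfies $\dimcol{G_1}{G_2}=\dimcol{F\cap l}{k}$, and since $k\subseteq F\cap l\subseteq l$ this divides $\dimcol{l}{k}$, which is prime to $p$ by hypothesis. Consequently a $p$-Sylow subgroup of $G_2$ has the same order as a $p$-Sylow subgroup of $G_1$, hence is itself a $p$-Sylow subgroup of $G_1$. Finally, because $G_1$ already contains a $p$-Sylow subgroup of $G$, the $p$-parts of $\card{G_1}$ and $\card{G}$ coincide, so a $p$-Sylow subgroup of $G_1$ is also one of $G$; thus $G_2\leq G_1$ contains a $p$-Sylow subgroup of $G$ as required.

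There is no serious obstacle in this argument. The only place where the hypothesis is genuinely used is the coprimality $(\dimcol{l}{k},p)=1$, which forces the index $\dimcol{G_1}{G_2}$ to be prime to $p$ and thereby guarantees that the full $p$-Sylow subgroup is preserved when passing from $G_1$ to the subgroup $G_2$; everything else is the formal Galois theory of compositums.
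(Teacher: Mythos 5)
Your proof is correct and follows essentially the same route as the paper: both take a $G_1$-extension of $k$, pass to the compositum with $l$, identify $\Gal(Fl/l)\cong\Gal(F/F\cap l)$ as a subgroup $G_2\leq G_1$, and use $(\dimcol{l}{k},p)=1$ to see that the index $\dimcol{F\cap l}{k}$ is prime to $p$, so $G_2$ still contains a full $p$-Sylow subgroup of $G$. Your write-up is in fact slightly more careful than the paper's, spelling out the Sylow bookkeeping explicitly.
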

\begin{proof}
Let $l/k$ be a $G_1$-extension. Then $lm/m$ is a Galois
extension with Galois group $G_2$ which is a subgroup of $G_1$ and
for which $[l:l\cap m]=\card{G_2}$. Since $(\dimcol{l\cap
m}{k},p)=1$, any $p^s\divides \dimcol{l}{k} = \card{G_1}$ also
divides $p^s \divides \dimcol{l}{l\cap m}=\card{G_2}$. Thus $G_2$
must also contain a $p$-Sylow subgroup of $G$.
\end{proof}

The case where $p$ divides the degree $\dimcol{m}{k}$ is more difficult. Let us consider next totally ramified extensions:
\begin{lem}\label{section3 - lemma on a ramified p-extension}
Let $p\not=2$. Let $G$ be a group, $k$ a $p$-adic field with $n=[k:\mQ_p]$ and $m/k$ a totally
ramified finite extension. Assume furthermore that $m/k$ is not the extension $\mQ_3(\zeta_9+\overline{\zeta}_9)/\mQ_3$. If $G$ is realizable over $k$ then $G$ is also realizable over $m$.
\end{lem}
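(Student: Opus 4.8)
The plan is to realize $G$ over $l$ by producing a continuous epimorphism $G_l\twoheadrightarrow G_k$ of absolute Galois groups: realizability over a local field means precisely being a continuous quotient of its absolute Galois group, so composing such an epimorphism with a surjection $G_k\twoheadrightarrow G$ exhibits $G$ as a quotient of $G_l$. This mirrors the strategy of \Lref{section1.1 - Lemma on local realization}, but since $G$ is now arbitrary I would work with the \emph{full} Galois groups, using the presentations of \Tref{JW} rather than those of \Tref{SDL}.

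First I would dispose of the trivial and tame cases: if $\dimcol{l}{k}=1$ there is nothing to prove, and if $p\nmid\dimcol{l}{k}$ then $l/k$ is tamely ramified, so $l\subseteq\tame{k}$ and $\tame{l}=\tame{k}$, whence $s=s'$ in the notation below. For the analysis I record the invariants of \Tref{JW}: since $l/k$ is totally ramified the residue field is unchanged, so $q$ and the action data $g,h$ are preserved, while $n=\dimcol{k}{\mQ_p}$ is replaced by $nt=\dimcol{l}{\mQ_p}$ (with $t=\dimcol{l}{k}$) and $s$ is replaced by some $s'\ge s$, where $p^s$ (resp. $p^{s'}$) counts the $p$-power roots of unity inside $\tame{k}$ (resp. $\tame{l}$).

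When $s=s'$ the two presentations differ only in the number of pro-$p$ generators, and I would write down the epimorphism explicitly, matching generators and killing the surplus ones according to the parities of $n$ and $nt$. Concretely I would send $\sigma,\tau$ to $\sigma,\tau$ and, when $n$ and $nt$ have the same parity, send $x_i\mapsto x_i$ for $i\le n$ and $x_j\mapsto 1$ for $j>n$; when $n$ is odd and $nt$ is even (so $t$ is even) I would use the shifted assignment $x_2\mapsto y_1$, $x_{i+1}\mapsto x_i$ for $2\le i\le n$, and $x_j\mapsto1$ for $j>n+1$. In each case the surplus commutators vanish and the single defining relation of $G_l$ is carried exactly onto that of $G_k$, because $\tau^\sigma=\tau^q$, the term $\langle x_0,\tau\rangle^g$ and the power $x_1^{p^s}$ all agree once $q,g,h,s$ are preserved; the map is then well defined and visibly surjective. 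The parity bookkeeping needs only $nt\ge n$ in the first case and $nt\ge n+1$ in the second, both of which hold since $t\ge 2$ there.

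The main obstacle is the case $s'>s$, where passing to $l$ makes new $p$-power roots of unity tame. Then the powers $x_1^{p^{s'}}$ and $x_1^{p^s}$, and the elements $\langle x_0,\tau\rangle$ built from $h\bmod p^{s'}$ versus $h\bmod p^{s}$, no longer match under the naive assignment, and one must instead exploit the extra pro-$p$ generators of $G_l$ together with the flexibility of the $\langle x_0,\tau\rangle^g$ term to reproduce the relation of $G_k$. I would first classify, using the ramification of the local cyclotomic tower $\mQ_p(\mu_{p^j})/\mQ_p$, exactly which totally ramified $l/k$ with $p$ odd satisfy $s'>s$: this forces $l$ to absorb the degree-$p$ wild layer $k(\mu_{p^{s+1}})/k(\mu_{p^{s}})$, a rigid constraint on the ramification index. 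For each such extension I would carry out the refined construction of $G_l\twoheadrightarrow G_k$, and the assertion is that it succeeds in every case except $\mQ_3(\zeta_9+\oline{\zeta}_9)/\mQ_3$, where $s$ jumps from $1$ to $2$ while $n=1$ and $t=3$ leave too few surplus generators to repair the relation. Verifying that this single extension is the only genuine obstruction among totally ramified $p$-odd extensions is the technical heart of the argument.
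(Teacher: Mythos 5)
Your overall strategy---producing a continuous epimorphism $G_l\twoheadrightarrow G_k$ from the presentations of \Tref{JW}---is the same as the paper's, but your execution breaks down at two points. First, your treatment of the case $s=s'$ rests on the claim that a totally ramified extension preserves $g$ and $h$ because it preserves the residue field. That is false: $g$ and $h$ record the action of $\sigma$ and $\tau$ on the $p$-power roots of unity in $\tame{k}$, which is not a residue-field invariant. Already for the totally \emph{tamely} ramified extension $l=\mQ_p(\mu_p)$ of $k=\mQ_p$ (a case your outline routes through the naive matching, since there $s=s'=1$) one has $h$ a primitive root modulo $p$ while $h'=1$; in general $h'\equiv h^r \pmod{p^s}$ for a tame totally ramified extension of degree $r$. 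So your generator-matching map sends the relator of $G_l$ to a word in $G_k$ built from $(g',h',s')$ rather than $(g,h,s)$, and there is no reason for that word to be trivial in $G_k$: the map is not well defined even in your ``easy'' case. Second, the case $s'>s$ is not proved at all: you assert that a ``refined construction'' succeeds for every such extension except $\mQ_3(\zeta_9+\oline{\zeta}_9)/\mQ_3$ and call the verification the technical heart, but that verification is precisely the content of the lemma.

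The paper's proof sidesteps both issues with a device you are missing: instead of matching generators and relators, it kills $x_0$ and every odd-indexed $x_i$. Under this map every factor of the single relation dies---in particular $\langle x_0,\tau\rangle^{g'}$ maps to $\tau^{\pi g'}$, which is already trivial because the tame inertia $\overline{\langle\tau\rangle}$ has no pro-$p$ part---so one gets $G_l\twoheadrightarrow D_l\ast F_p(\intpart{\frac{nr-1}{2}})$ with no comparison of $(g,h,s)$ and $(g',h',s')$ whatsoever, and $D_l\cong D_k$ since total ramification preserves $q$. Whenever $\intpart{\frac{nr-1}{2}}\geq n+1$, i.e.\ $n(r-2)\geq 2$, this group surjects onto $D_k\ast F_p(n+1)\twoheadrightarrow D_k\ast P_k\twoheadrightarrow G_k$. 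Consequently the genuinely delicate cases are the small-degree ones, $r\leq 2$ or $(n,r)=(1,3)$, and not the cases with $s'>s$: for instance $\mQ_3(\zeta_{27}+\oline{\zeta}_{27})/\mQ_3(\zeta_9+\oline{\zeta}_9)$ has $s'>s$ yet is covered uniformly by the counting argument, whereas your outline leaves it unproved. The leftovers are then settled by \Lref{section3 - lemma on prime to p extensions} when $p\nmid r$, and only in the single remaining configuration $k=\mQ_3$, $r=3$ does one compare parameters directly---which is exactly where the exception $\mQ_3(\zeta_9+\oline{\zeta}_9)$ arises.
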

\begin{rem}
This shows that if $G$ has a subgroup $G_1$ that contains a $p$-Sylow of $G$ and is realizable over $k$ then $G$ also has a subgroup $G_2$ that contains a $p$-Sylow of $G$ and is realizable over $m$ (moreover, $G_1$ is realizable over $m$).
\end{rem}
\begin{proof}
Let $m/k$ be a totally ramified extension of degree $r=\dimcol{m}{k}$.
We shall construct an epimorphism $G_m\ra G_k$. For this we shall consider the presentations given in \Tref{JW}. Denote the parameters of $k$ by $n,q,s,g$ and $h$. Then the degree of $m$ over $\mQ_p$ is $nr$ and its residue degree remains $q$. Denote the rest of the parameters over $m$ by $s',g'$ and $h'$ (the parameters that correspond to $s,g$ and $h$ in \Tref{JW}).
Then by \Tref{JW}, $G_m$ has the following presentation (as a profinite group):
\begin{equation*}
G_m =  
\langle \sigma,\pro{\tau}{p'},\pro[N]{x_0,\ldots,x_{nr}}{p} \subjectto \tau^\sigma = \tau^{q}, x_0^\sigma = \langle x_0,\tau \rangle^{g'} x_1^{p^{s'}}[x_1,x_2]\cdots [x_{nr-1},x_{nr}]\rangle, 
\end{equation*}
if $nr$ is even and  \begin{equation*}G_m = \langle \sigma,\pro{\tau}{p'},\pro[N]{x_0,\ldots,x_{nr}}{p} \subjectto \tau^\sigma =
\tau^{q}, x_0^\sigma = \langle x_0,\tau \rangle^{g'}
x_1^{p^{s'}}[x_1,y_1][x_2,x_3]\cdots [x_{nr-1},x_{nr}]\rangle,  
\end{equation*}
if $nr$ is odd. 
Let $P_k$ 
be the closed normal subgroup of $G_k$ 
 generated  by
$x_0,\ldots,x_n$ 
and let $D_k$ (resp. $D_m$) be the closed subgroup  generated by
$\sigma$ and $\tau$. By assumption, $P_k$ is a pro-$p$ group. Note that as $k$ and $m$ have the same residue degree (same $q$), $D_k\cong D_m$.


Let us construct the epimorphism from $G_m$ to $G_k$. First send $x_0$ and every $x_k$ with $k$ odd in the presentation of $G_m$ to $1$. We get an epimorphism
$$G_m \twoheadrightarrow \langle \sigma,\pro{\tau}{p'},\pro[N]{z_1,\dots,z_d}{p} \subjectto \sigma\tau\sigma^{-1}=\tau^q \rangle $$ 
where $d = \intpart{\frac{nr-1}{2}}$ and $\intpart{\gamma}$ denotes the smallest integer $\geq \gamma$. 
Let us continue under the assumption $d\geq n+1$. Then there is an epimorphism $F_p(d) \twoheadrightarrow F_p(n+1)$. We therefore obtain epimorphisms:
$$G_m \twoheadrightarrow \langle \sigma,\pro{\tau}{p'},\pro[N]{z_1,\dots,z_{n+1}}{p} \subjectto \sigma\tau\sigma^{-1}=\tau^q \rangle \twoheadrightarrow  D_k\ltimes P_k \twoheadrightarrow G_k.$$

The numerical condition $\intpart{\frac{nr-1}{2}}\geq n+1$ fails if and only if:
 \begin{enumerate}
 \item $r = 1$, or
 \item $r = 2$, or
 \item $r = 3$ and $n = 1$.
 \end{enumerate}
The case $r=1$ is trivial. Since $p$ is an odd prime, the cases $r = 2$ and $r = 3$ are done by Lemma \ref{section3 - lemma on prime to p extensions}, unless $p = 3$ and $\dimcol{m}{k} = r= 3$, in which case $n = 1$, so $k = \mQ_3$. If $m\not=\mQ_3(\zeta_9+\overline{\zeta}_9)$ then $m\cap \tame{k}=k$ and the parameters $g,h,s$ in the presentation of $G_{k}$ remain the same in the presentation of $G_{m}$. In such case there is an epimorphism from $G_m$ onto $G_{k}$ whose kernel is generated by $\langle x_2,x_3 \rangle$.
\end{proof}

So the case $k=\mQ_3$ and $m=\mQ_3(\zeta_9+\oline{\zeta}_9)$ remains open. This will be one of several \emph{\sensitive} cases.

\subsection{The sensitive cases}\label{ss:sc}

\begin{defn}
We call the extension $m/k$ \emph{\sensitive} if it is one of the following:
\begin{enumerate}
\item \label{specialiv} $k=\mQ_3$ and $m$ is the totally ramified $3$-extension $\mQ_3(\zeta_9+\oline{\zeta}_9)$,

\item \label{speciali}
 $k=\mQ_5$ and $m = \mQ_5(\zeta_{11})$ is the unramified $5$-extension,

\item \label{specialii} $\dimcol{k}{\mQ_3}=1,2,3$ and $m/k$ is the unramified $3$-extension,

\item \label{specialiii} $k=\mQ_3$ and $m = \mQ_3(\zeta_7)$ is the unramified $6$-extension.
\end{enumerate}
\end{defn}

\begin{rem}
There are $17$ sensitive field extensions, up to isomorphism: one over $\mQ_5$ and $16$ over $\mQ_3$. Fixing the algebraic closures of the respective $p$-adic fields, there is one sensitive $5$-adic extension and $27$ $3$-adic ones. This can be verified using the automated tools in \cite{jj}. We provide details in the Appendix.
\end{rem}

Let us formulate the problem in case (\ref{specialiv}) for odd order groups:

\begin{rem}
Given a field $F$ denote by $G_F^{\odd}$ the Galois group that corresponds to the maximal pro-odd Galois extension of $F$.
In the $p$-adic case, for odd $p$, this is obtained from the presentation of $G_k$ (see \Tref{JW}) simply by dividing by the $2$-part of $\sigma$ and $\tau$. In such case we get a presentation of $G_F^{\odd}$  by identifying $\sigma_2 = \tau_2=1$. We get that $y_1$ is a power of $x_1$ and hence $[x_1,y_1]=1$.
\end{rem}

\begin{ques}\label{odd_sensitive.que} Let $m/k$ be the {\sensitive} extension \eq{specialiv}.
Then $q = 3$; also $p^{s} = 3$ so we can choose $h = -1$.
For $m$ we have $p^{s_m} = 9$ and $\tau(\zeta_9+\zeta_9^{-1}) = \zeta_9+\zeta_9^{-1}$, so $h_m = -1$ as well.
Theorem \ref{JW} gives us the presentations:
\begin{equation*} G_k^{\odd} = \langle \sigma,\pro{\tau}{3'},\pro[N]{x_0,x_{1}}{3} \subjectto \tau^{\sigma}=\tau^3, x_0^{\sigma}=\langle x_0,\tau\rangle x_1^3 \rangle,
\end{equation*}  while:
\begin{equation*} G_m^{\odd} = \langle \sigma,\pro{\tau}{3'},\pro[N]{x_0,x_{1},x_2,x_3}{3} \subjectto \tau^{\sigma}=\tau^3, x_0^{\sigma}=\langle x_0,\tau\rangle x_1^9[x_2,x_3] \rangle,
\end{equation*} where $\sigma,\tau$ are of order prime to $2$ and $\langle x_0,\tau \rangle=(x_0\tau x_0^{-1}\tau )^{\frac{\pi}{2}}$, which has order a power of $3$ in every finite quotient. Does the following hold: Let $G$ be an epimorphic image of $G_k^{\odd}$, is there necessarily a subgroup $G(p)\leq G_0\leq G$ so that $G_0$ is an epimorphic image of $G_m^{\odd}$?
\end{ques}

Note that for a $3$-group $G$ the claim was proved in \Pref{prop 1}.

\begin{rem}
In fact quotients of $G_k^{\odd}$ with $\tau = 1$ can be covered:
the group $\sg{\sigma,\pro[N]{x_0,x_1}{3} \subjectto x_0^{\sigma}
= x_1^3} = \sg{\sigma,\pro[N]{x_1}{3}}$ is covered by $\sigma
\mapsto \sigma$, $\tau \mapsto 1$, $x_0 \mapsto 1$, $x_1 \mapsto
1$, $x_2 \mapsto x_1$ and $x_3 \mapsto 1$. This corresponds to
realization of $G$  over $k$ whose ramification index is a $3$-power. 
\end{rem}

\subsection{Extensions of local fields}\label{sec5}

We can now approach the general case. Recall the presentation of
$G_k$ from \Tref{JW}. Let $P_k$ denote the closed normal subgroup
generated by $x_0,\dots,x_n$ and $D_k$ the closed subgroup
generated by $\s$ and $\tau$, as in
\Rref{local_finite_quotients.rem}.
\begin{rem}\label{pre4.10}
\begin{enumerate}
\item Decompose $\langle \s \rangle $ into its $p$-primary part
generated by $\sigma_p$ and its complement generated by
$\sigma_{p'}$ so that $\s =\s_p\s_{p'}$, where $[\s_p,\s_{p'}] =
1$. Then the pro-$p$ closure of $\sg{\s_p}\cdot P_k$ is a $p$-Sylow subgroup of $G_k$.
\item\label{pre.2} In every finite quotient $\s_p$ is a power of $\s$, and so normalizes $\tau$. It follows that $[\tau,\s_p]$ is a power of $\tau$, and so a pro-$p'$ element.
\item The image of the closure of $\sg{\sigma_p}P_k$ is normal in a quotient of $G_k$ if and only if $\tau$ conjugates $\s_p$ into the closure of $\sg{\sigma_p}P_k$; but then the image of $[\tau,\s_p]$ is a pro-$p$ element, so by \eq{pre.2} this is the case if and only if the image of $[\tau,\s_p]$ is trivial.
\item\label{pre.4} Therefore, the maximal quotient of $G_k$ with a normal $p$-Sylow subgroup is defined by the relation $[\s_p,\tau] = 1$.
\end{enumerate}
\end{rem}

\begin{lem}\label{coverbar}
Let $p$ be an odd prime. Let $m/k$ be an extension of $p$-adic
fields with $f = \dimcol{\bar{m}}{\bar{k}}$ a $p$-power, and
$\intpart{\frac{nr-1}{2}} \geq n+2$ where $n = \dimcol{k}{\mQ_p}$
and $r = \dimcol{m}{k}$.

Then $G_m$ maps onto the maximal quotient $\bar{G}_k$ of $G_k$
with normal $p$-Sylow subgroup.
\end{lem}
\begin{proof}
We shall construct an epimorphism from $G_m$ to $\bar{G}_k$. Let
$s_m,g_m,h_m$ be the invariants $s,g,h$ in \Tref{JW} that correspond
to $m$, and let $n = \dimcol{k}{\mQ_p}$. \Tref{JW} gives the
following presentation of $G_m$:
\begin{equation*}
G_m =  
\langle \sigma,\pro{\tau}{p'},\pro[N]{x_0,\ldots,x_{nr}}{p} \subjectto \tau^\sigma = \tau^{q^{f}}, x_0^\sigma = \langle x_0,\tau \rangle^{g_m} x_1^{p^{s_m}}[x_1,x_2]\cdots [x_{nr-1},x_{nr}]\rangle, 
\end{equation*} if $nr$ is even and \begin{equation*}G_m = \langle \sigma,\pro{\tau}{p'},\pro[N]{x_0,\ldots,x_{nr}}{p} \subjectto \tau^\sigma =
\tau^{q^{f}},\end{equation*}\begin{equation*} x_0^\sigma = \langle x_0,\tau \rangle^{g_m}
x_1^{p^{s_m}}[x_1,y_1][x_2,x_3]\cdots [x_{nr-1},x_{nr}]\rangle,  
\end{equation*} if $nr$ is odd.

Let $P_k$ (resp. $P_m$) be
the closed normal subgroup generated by
$x_0,\ldots,x_{n}$ (resp. $x_0,\dots,x_{nr}$) in $G_k$ (resp. $G_m$) and let $D_k\leq G_k$ (resp. $D_m\leq G_m$) be the closed subgroup generated by
$\sigma,\tau$ in $G_k$ (resp. in $G_m$).

Set $d = \intpart{\frac{nr-1}{2}}$, so by assumption $d \geq n+2$. Similarly to \Lref{section3 - lemma on a ramified p-extension} (noting that this time $D_m$ can be viewed as a subgroup of index $f$ in $D_k$), we have an epimorphism
\begin{eqnarray*}
G_m & \twoheadrightarrow & \sg{\s, \pro{\tau}{p'}, \pro[N]{z_1,\dots,z_m}{p}  \subjectto \tau^\s = \tau^{q^f}} \\
& \cong & \sg{\pro{\s_p}{p},\pro{\s_{p'}}{p'}, \pro{\tau}{p'}, \pro[N]{z_1,\dots,z_m}{p}  \subjectto \tau^{\s_p \s_{p'}} = \tau^{q^f}, [\s_p,\s_{p'}] = 1}.
\end{eqnarray*}

Let us divide by the relations $z_m^f=\sigma_p$ and $\tau^{z_m}=\tau^{q\s_{p'}^{-1/f}}$, where $\s_{p'}^{-1/f}$ is well defined since $f$ is a $p$-power. We then obtain an epimorphism to
$$\langle \pro{\s_{p'}}{p'},\pro{\tau}{p'}, \pro[N]{z_1,\dots,z_m}{p} \subjectto \tau^{z_m}=\tau^{q\s_p'^{-1/f}},[z_m^f,\s_{p'}]=1 \rangle. $$

Adding the relation $[z_m,\s_{p'}]$ and sending $z_m \mapsto \s_p$ maps this group onto
$$\langle \pro{\s_{p'}}{p'},\pro{\tau}{p'}, \pro[N]{z_1,\dots,z_{m-1}, \s_p}{p} \subjectto \tau^{\s_p\s_{p'}^{1/f}}=\tau^{q},[\s_p,\s_{p'}]=1 \rangle. $$

Mapping $\s_{p'} \mapsto \s_{p'}^f$, this groups maps onto
$$\sg{\pro{\s_{p}}{p},\pro{\s_{p'}}{p'},\pro{\tau}{p'}, \pro[N]{x_0,\cdots,x_n}{p} \subjectto \tau^{\s_p\s_{p'}}=\tau^{q}, \, [\s_{p},\s_{p'}]= [\s_{p},\tau]=1},$$
since by \Rref{pre4.10} the assumption that the normal subgroup generated by $\s_p$
is a $p$-group is equivalent to $[\s_p,\tau] = 1$.

But $\bar{G}_k$ is a quotient of this group by \Tref{JW} and
\Rref{pre4.10}.\eq{pre.4}.
\end{proof}

Using this, we can now now prove:
\begin{proof}[Proof of \Tref{intro-local main thm}]
Let $n,q$ be as defined above for $k$, and let $r =
\dimcol{m}{k}$. Let $f=\dimcol{\oline{m}}{\oline{k}}=f_pf_{p'}$
where $f_p$ is a $p$-power and $f_{p'}$ is prime to $p$. There is
an unramified $C_f$-extension $m'/k$ which lies in $m$, and then
$m/m'$ is totally ramified. Denote by $m_p$ the subfield of $m'$
which is fixed by $C_{f_p}$. Let
$r'=\frac{r}{f_{p'}}=\dimcol{m}{m_p}$. By Lemma \ref{section3 -
lemma on prime to p extensions}, there is a subgroup $G_0\leq G$
that contains a $p$-Sylow subgroup of $G$ and an epimorphism
$\phi:G_{m_p}\ra G_0$. The list of sensitive cases satisfies that if
$m/k$ is non-sensitive and $m_p/k$ is unramified and prime to $p$,
then $m/m_p$ is also non-sensitive and therefore we can assume
without loss of generality that $m_p=k$, $G=G_0$, i.e $f_{p'}=1$,
$f=f_p$, and $r' = r$.

If $\intpart{\frac{nr-1}{2}} \geq n+2$ then $G$ is a quotient of $G_m$ by \Lref{coverbar}. This numerical condition fails if and only if
\begin{enumerate}
\item $r = 4,5$ and $n = 1$;
\item $r = 3$ and $n = 1,2,3$;
\item $r = 1,2$.
\end{enumerate}

The cases $r=1,2,4$ are covered by Lemma \ref{section3 - lemma on prime to p extensions}. We are left with cases $r=3,5$. For $r=5$, $n = 1$ so $k = \mQ_5$ and by \Lref{section3 - lemma on a ramified p-extension} we may assume $m/k$ is not totally ramified, so $m/k$ is the unramified $5$-extension which is sensitive.

Let $r=3$. Lemma \ref{section3 - lemma on prime to p extensions} covers the case $p\not=3$, so we may assume $p=3$. Note that $f \,|\, r$. If $f=1$, Lemma \ref{section3 - lemma on a ramified p-extension} applies, except for $m=\mQ_3(\zeta_9+\oline{\zeta_9})$ and $k=\mQ_3$, which is sensitive. If $f = 3$, then $m/k$ is the unramified $3$-extension and $n=[k:\mQ_3]=1,2,3$ which are all sensitive.
\end{proof}

The following example shows that the assumption in
\Tref{intro-local main thm} that the normal $p$-Sylow subgroup of
$G$ is normal, is essential.

\begin{exam}\label{local-counterexample.exm}
Let $p < q$ be odd primes such that $p^p \equiv 1 \pmod{q}$ and $p
\not\equiv 1 \pmod{q}$ (for example $p = 7$ and $q = 29$). Let
$G=C_p \wr D$ where $$D = \langle a,b\subjectto a^{p}=b^{q}=1,
a^{-1}ba=b^p\rangle.$$ Let $k=\mQ_p$ and $m/k$ the unramified
extension of degree $p$. Then $G$ is realizable over $k$ but there
is no subgroup of $G$ that contains a $p$-Sylow subgroup of $G$
and is realizable over $m$.
\end{exam}
\begin{proof}
Let $P=C_p^{pq}$, so that $G=P\rtimes D$. Then one has the
projections
\begin{equation*}
G_k\ra G_k^{\odd} = \langle
\sigma,\pro{\tau}{p'},\pro[N]{x_0,x_1}{p} \subjectto
\tau^{\sigma}=\tau^p, x_0^{\sigma}=\langle x_0,\tau\rangle x_1^p
\rangle \ra \end{equation*}\begin{equation*} \ra \langle
\sigma,\pro{\tau}{p'},\pro[N]{x_1}{p} \subjectto
\tau^{\sigma}=\tau^p,x_1^p=1\rangle,
\end{equation*}
where the latter two group are pro-odd and the second epimorphism
is obtained by dividing by $x_0$. The latter group maps onto $G$
by $\s \mapsto a$ and $\tau \mapsto b$. It is therefore left to
prove that for any homomorphism $\phi:G_m\ra G$, $\Im(\phi)$ does
not contain a $p$-Sylow subgroup of $G$. Assume on the contrary
that $H=\Im(\phi)$ does. Recall:
\begin{equation*}
G_m = \langle \sigma,\pro{\tau}{p'},\pro[N]{x_0,\ldots,x_{p}}{p}
\subjectto \tau^\sigma = \tau^{p^p}, x_0^\sigma = \langle x_0,\tau
\rangle x_1^{p}[x_1,y_1][x_2,x_3]\cdots [x_{p-1},x_{p}]\rangle,  
\end{equation*}

Since $q$ is the only prime dividing $\card{G}$ other than $p$, and $\tau$ is pro-$p'$, any map into $G$ must split through:
$$ G_m\ra \langle \sigma,\pro{\tau}{p'},\pro[N]{x_0,\ldots,x_{p}}{p} \subjectto [\sigma,\tau]=1,\tau^{q}=1, $$ $$ x_0^\sigma = \langle x_0,\tau \rangle
x_1^{p}[x_1,y_1][x_2,x_3]\cdots [x_{p-1},x_{p}]\rangle.$$
However the latter group has a normal $p$-Sylow subgroup which is the product of the closed normal subgroup generated by the $x_i$'s and the pro-$p$ group generated by $\sigma^{\pi_p}$.
In particular, letting $\pi\co G\ra G/P=D$ be the projection, the image of $\pi\phi$ has a normal $p$-Sylow subgroup. This implies $\pi\phi$ is not surjective. But $H$ contains a $p$-Sylow subgroup of $G$, so we must have $\Im(\pi\phi)=C_p$. Again since $H$ contains a $p$-Sylow subgroup, and in particular $P$, we must have $H=P\rtimes C$ where $C=C_p$ is a subgroup of $D$ and the action of $C$ on $P$ is induced from the action of $D$. Thus:
$$\rank(H)=\rank(H/[H,H])=\rank((P/[P,C])\times C)=q+1.$$
Since $H$ is a $p$-group any epimorphism to it must split through $\oline{G_m(p)}$. However $\rank(\oline{G_m(p)})=\dimcol{m}{k}+1=p+1$, leading to a contradiction.
\end{proof}

\section{Extensions of number fields}\label{sec3}

%

We shall now apply \Tref{intro-local main thm} to study admissibility and wild admissibility.
\subsection{Main Theorem}\label{ss:5.1}


\begin{proof}[Proof of \Tref{intro_main.thm}]
As mentioned in the introduction, Liedahl's condition is necessary. 
Let us show that if $G$ satisfies this condition then $G$ is $M$-admissible.

We claim that one can choose distinct primes $\varpi_i(p)$,
$i=1,2, p\divides\card{G}$, of $M$ and corresponding subgroups
$H_i(p)\leq G$ so that $H_i(p)$ contains a $p$-Sylow subgroup of
$G$ and is realizable over $M_{\varpi_i(p)}$, $i=1,2$. 

As $G$ is $K$-admissible, for
every $p\divides\card{G}$ there are two options:
\begin{enumerate}
\item \label{r1} there are two primes $\nu_1,\nu_2$ of $K$ dividing $p$ and two
subgroups $G(p)\leq G_i\leq G$ so that $G_i$ is realizable over
$K_{\nu_i}$, $i=1,2$.
\item \label{r2} $G(p)$ is realizable over $K_{v}$ for $\nu$ which does not divide $p$.
\end{enumerate}

In case \eq{r1} with $p$ odd and $G(p)$ normal in $G$, by
 \Tref{intro-local main thm}, for any prime
$\varpi$ dividing $\nu_1$ or $\nu_2$ there is a subgroup $G(p)\leq
H_\varpi \leq G$ that is realizable over $M_\varpi$. Choose two
such primes $\varpi_1(p),\varpi_2(p)$ and set $H_i(p) :=
H_{\varpi_i(p)}$ (the subgroups \Tref{intro-local main
thm} constructs). In case $G(p)$ is not normal or $p=2$, we
assumed $G(p)$ is metacyclic and by \Lref{5.11}, $G(p)$ is
realizable over any $M_\varpi$ for any prime  $\varpi$ dividing
$\nu_1$ or $\nu_2$. In such case similarly choose two such primes
$\varpi_1(p),\varpi_2(p)$ and set $H_i(p)= G(p)$.

In case \eq{r2}, $G(p)$ is metacyclic. If $p$ has more than one
prime divisor in $M$ then there are two primes
$\varpi_1(p),\varpi_2(p)$ so that $\varpi_i(p)$ divides
$p$ and by \Lref{5.11} $H_i(p):=G(p)$ is realizable over
$M_{\varpi_i(p)}$, $i=1,2$.

If $p$ has a unique prime divisor in $M$ then $G(p)$ is assumed to
have a Liedahl presentation. Liedahl's condition implies that
there are infinitely many primes $\varpi(p)$ for which $G(p)$ is
realizable over $M_{w(p)}$ (see Theorem~28 and Theorem~30 in
\cite{Lid2}). Thus, we can choose two primes
$\varpi_1(p),\varpi_2(p)$ for every prime $p\divides \card{G}$
that has only one prime divisor in $M$, so that the primes
$\varpi_i(p),i=1,2$, are not divisors of any prime
$q\divides\card{G}$ and are all distinct. For such $p$, we also
choose $H_i(p):=G(p)$.

We have covered all cases of behavior of divisors of rational
primes in $M$  and hence proved the claim. It follows that $G$ is
$M$-preadmissible and as $G$ has the GN-property over $M$, $G$ is
$M$-admissible. 
\end{proof}

\begin{rem}\label{sylowmetacyclic.rem} If $G$ has metacyclic Sylow subgroups, the proof of \Tref{intro_main.thm} does not use \Tref{intro-local main thm} and holds for sensitive extensions as well.
\end{rem}
\subsection{Wild admissibility}


%
%
%

As to wild admissibility \Tref{intro_main.thm} and
\Lref{wildGN.lem} give:
\begin{cor}
Let $M/K$ be a non-sensitive extension. Let $G$ be a
$K$-admissible group for which every Sylow subgroup is either
normal or metacyclic and the $2$-Sylow subgroups are metacyclic.
Assume $G$ has the GN-property over $M$, satisfies Liedahl's
condition over $M$ but there is a prime $p$ for which $G(p)$ does
not have a Liedahl presentation over $M$. Then $G$ is wildly
$M$-admissible.
\end{cor}
We deduce that for groups as in \Tref{intro_main.thm}, wild
admissibility goes up in the following sense that generalizes \Cref{section2- prop on wild admissiblity of p-groups}:
\begin{cor}
Let $M/K$ be a non-sensitive extension. Let $G$ be a wildly
$K$-admissible group for which every Sylow subgroup is either
normal or metacyclic and the $2$-Sylow subgroups are metacyclic. Assume $G$ has the GN-property over $K$ and
$M$ satisfies Liedahl's condition over $M$. Then $G$ is wildly
$M$-admissible.
\end{cor}
\begin{proof}
By \Tref{intro_main.thm}, $G$ is $M$-admissible. The assertion now
follows from \Lref{wildGN.lem}, applied to both $K$ and $M$, and
the fact that if $G(p)$ does not have a Liedahl presentation over
$K$ then $G(p)$ does not have a Liedahl presentation over $M$ (see
Remark \ref{section0 - remark on down-inheritance of Lid cond}).
\end{proof}

\subsection{Examples}

The following is an example in which
\Tref{intro_main.thm} is used to understand how admissibility behaves
under extensions of a given number field:
\begin{exam}\label{mod.exm}
Let $p,q$ be odd primes and $m$ an integer so that $m$ is not
square mod $q$ but is a square mod $p$ and $p\equiv q+1$ (mod
$q^2$). For example $p=13$, $q=3$, $m=14$. Let $K=\mQ(\sqrt{m})$
and $G=C_p\wr H$, where $H$ is one of the following groups:
\begin{enumerate}
\item\label{exm1}
$H= M_{q^3}$ is the modular group of order $q^3$, i.e. $$H=\langle
x,y| x^{-1}yx=y^{q+1}, x^q=y^{q^2}=1 \rangle.$$

\item\label{exm2}
$H=C_{pq}\times C_q$.

\item\label{exm3}
$H=C_t$ where $t\in \mathbb{N}_{\odd}$ is prime to $p$.
\end{enumerate}
We shall show in each of the cases $G$ is $K$-admissible. Let $M$ be any non-sensitive extension of $K$.

By \Tref{Neukirch main cor}, in case (\ref{exm1}) $G$ satisfies the
GN-property over any number field that does not have any $p$-th and $q$-th
roots of unity, in particular over $K$. By \Cref{cor_sal}, in
cases (\ref{exm2}),(\ref{exm3}), $G$ satisfies the GN-property over
any $M$ and in case (\ref{exm1}) if $M$
contains the $q$-th roots of unity.

In cases (\ref{exm2}),(\ref{exm3}), $G$ satisfies Liedahl's
condition over any $M$ and in case (\ref{exm1}), $G$ satisfies
Liedahl's condition over $M$ if and only if $q$ decomposes in $M$
or $M\cap \mQ(\mu_{q^2})\subseteq \mQ(\mu_q)$.

It follows from Theorem \ref{intro_main.thm}, that in cases
(\ref{exm2}) and (\ref{exm3}) $G$ is $M$-admissible. In case (\ref{exm1}), if one assumes $M$ does not contain any $p$-th and $q$-th roots of
unity or that $M$ contains the $q$-th roots of unity then $G$ is $M$-admissible if and only if $G$ satisfies Liedahl's condition.
\end{exam}

\begin{proof}
The prime $p$ splits (completely) in $K$. Denote it's prime divisors in $K$ by $v_1,v_2$. Then $K_{v_i}\cong \mQ_p$ for $i=1,2$.  Using the presentation of  $G_{\mQ_p}^{\odd}$ given in \Qref{odd_sensitive.que} and dividing by $x_0=1$ one obtains an epimorphism:
\begin{equation}\label{quotient_exm.equ} G_{\mQ_p}\ra \langle \sigma,\pro{\tau}{p'},\pro[N]{x_1}{p} \subjectto \tau^\sigma=\tau^p , x_1^3=1\rangle. \end{equation}
 Since $p\equiv q+1$ (mod $q^2$) there is an epimorphism $$\langle \sigma, \pro{\tau}{p'} \subjectto \tau^\sigma=\tau^p \rangle\ra M_{q^3}$$ which together with Epimorphism \ref{quotient_exm.equ} shows that  $C_p\wr M_{q^3}$ is an epimorphic image of $G_{\mQ_p}$.
The group $G$ in case \ref{exm3} can obtained as an epimorphic image of $G_{\mQ_p}$ after dividing \ref{quotient_exm.equ} by $\tau=1$. In case \ref{exm2}, since $q|p-1$, there is an epimorphism
$$\langle \sigma, \pro{\tau}{p'} \subjectto \tau^\sigma=\tau^p \rangle\ra C_{pq}\times C_q,$$ which together with \ref{quotient_exm.equ} can be used to construct an epimorphism onto $G$.

In particular $C_p\wr H$ is realizable over $K_{v_1},K_{v_2}$ in all cases. Since $M_{q^3}$, $C_q\times C_q$ and $C_t$ have Liedahl presentations over $K$, they are realizable over completions at infinitely many  primes of $K$. As $G$ has the GN-property over $K$, it follows that $G$ is $K$-admissible in all cases.
\end{proof}

\begin{rem}
As Case \ref{exm3} of \Eref{mod.exm}  shows, the rank of $p$-Sylow
subgroups of  $K$-admissible groups is not bounded as apposed to
the case of admissible $p$-group in which the rank of the group is bounded
(see \cite[Section 10]{Sch}). 
\end{rem}
\begin{rem}
Case \ref{exm2} in \Eref{mod.exm} is an example of a group for
which proving $M$-admissibility requires the use of all steps in
the proof of \Tref{intro-local main thm}.
\end{rem}

The following example shows that the assumption that every Sylow
subgroup is either normal or metacyclic is essential for
\Tref{intro_main.thm} even for odd order groups and non-sensitive
extensions:
\begin{exam}\label{global-counterexample}
As in \Eref{local-counterexample.exm}, let $p < q$ be odd primes
such that $p^p \equiv 1 \pmod{q}$ and $p \not\equiv 1 \pmod{q}$.
Let $G=C_p \wr D$ where $$D = \langle a,b\subjectto a^{p}=b^{q}=1,
a^{-1}ba=b^p\rangle.$$ Let $d$ be a non-square integer that is a
square mod $pq$ and $K=\mQ(\sqrt{d})$. Let $v_1,v_2$ be the primes of $K$ dividing $p$. Let $M/K$ be a $C_p$-extension in which
both $v_1$ and $v_2$ are inert and $M$ does not contain any
$p$-th and $q$-th roots of unity.

Since both $p$ and $q$ have more than one prime divisor in $M$,
$G$ satisfies Liedahl's condition over $M$. As $M$ does not have
any $p$-th and $q$-th roots of unity, by \Tref{Neukirch main cor}, $G$ has
the GN-property over $M$ and $K$. We shall now show $G$ is
$K$-admissible but not $M$-admissible. Note that the only
condition of \Tref{intro_main.thm} that fails is that either $G$
has a normal $p$-Sylow subgroup or a metacyclic one.
\end{exam}
\begin{proof}
By \Eref{local-counterexample.exm}, $G$ is realizable over $\mQ_p$
and hence over $K_{v_1},K_{v_2}$. As $G$ has the GN-property over
$K$, $G$ is $K$-admissible. On the other hand, since $G(p)$ is not
metacyclic, a subgroup of $G$ that contains $G(p)$ is realizable
only over completions of $M$ at prime divisors of $p$. Let $w_i$
be the prime dividing $v_i$ in $M$, $i=1,2$. Then $w_1,w_2$ are
the only primes dividing $p$ in $M$ but by
\Eref{local-counterexample.exm} $G$ is not realizable over
$M_{w_i}$, for $i=1,2$. In particular $G$ is not $M$-preadmissible
and not $M$-admissible.
\end{proof}

\section*{Appendix}

We use the Jannsen-Wingberg presentation of $G_{\mQ_3}$ to count the sensitive extensions, as defined in Subsection~\ref{ss:sc}, up to isomorphism.
\forget 
There are $209$ sensitive field extensions (NO). Indeed, there is a single extension in each of cases (1), (2) and (4). In case (3) the extension is unramified, so it suffices to count the ground fields, which we do by degrees over $\mQ_3$. In degree $1$ there is of course one case, and there are $\card{\md[2]{\mQ_3}} - 1 = 3$ quadratic extensions.

Since the abelianization of $\overline{G_{\mQ_3}(3)}$ is $C_3^2$, there are $\frac{3^2-1}{2} = 4$ Galois cubic extensions of $\mQ_3$. For every non-Galois cubic extension there is a unique cubic Galois extension of a quadratic extension of $\mQ_3$ (generated by the square root of the discriminant), and every such pair gives rise to $3$ non-Galois cubic extensions (since this is the number of involutions in $S_3$). Applying \Tref{SDL} to the three quadratic extensions, we get that the number of cubic Galois extensions is $\frac{3^4-1}{2} = 40$ for the unramified extension, and $\frac{3^3-1}{2} = 13$ for each of the two ramified ones. So we have $(13+13+40)\cdot 3 = 198$ non-Galois cubic extensions, and $1+1+(1+3+198)+1=209$ altogether.
\forgotten
There is a
single extension in each of cases (1), (2) and (4). In case (3)
the extension is unramified, so it suffices to count the ground
field $k$, which we do by degrees over $\mQ_3$. In degree
$1$ there is one case. In degree $2$ there are $\card{\md[2]{\mQ_3}} - 1 = 3$ quadratic extensions.

Since the abelianization of $\overline{G_{\mQ_3}(3)}$ is $C_3^2$,
there are $\frac{3^2-1}{2} = 4$ Galois cubic extensions of
$\mQ_3$. For every non-Galois cubic extension $k$ there is a
unique $S_3$-Galois extension of $\mQ_3$ (generated over $k$ by
the square root of the discriminant).

The $S_3$-Galois extensions of $\mQ_3$ are in one-to-one
correspondence to the normal subgroups of $G_{\mQ_3}$ with
quotient $S_3$. The number of such subgroups is the number of
epimorphisms from $G_{\mQ_3}$ to $S_3$, divided by
$\card{\Aut(S_3)} = 6$. When counting epimorphisms $\varphi \co
G_{\mQ_3} \ra S_3$, we may assume the generators are in $S_3$,
which simplifies the presentation a great deal. Since $p^s = q =
3$ and we may assume $g = 1$ and $h = -1$, the presentation is
$$G_{\mQ_3} = \langle \sigma,\pro{\tau}{3'},\pro[N]{x_0,x_{1}}{3} \subjectto \tau^\sigma =
\tau^{3}, \,x_0^\sigma = (x_0 \tau x_0^{-1} \tau)^{\frac{\pi}{2}}
x_1^{3}[x_1,y_1]\rangle.$$

However, since $x_1$ is a pro-$3$ element, we may assume $x_1^3 =
1$. Since all elements of order $3$ in $S_3$ commute with each
other, we may assume $[x_1,y_1] = 1$ (see \Tref{JW} for more
details on $y_1$). Since $\tau$ is a pro-$3'$ element,
$\varphi(\tau)$ has order at most $2$, so $\varphi(x_0 \tau
x_0^{-1} \tau)$ is a commutator, whose order must divide $3$.
Exponentiation by $\frac{\pi}{2}$ squares such elements. So every
epimorphism to $S_3$ splits through
$$\langle \sigma,\pro{\tau}{3'},\pro[N]{x_0,x_{1}}{3} \subjectto \tau^2 = x_0^3 = x_1^3 = 1, \, \tau^\sigma =
\tau^{3}, \,x_0^\sigma = (x_0 \tau x_0^{-1} \tau)^2 \rangle,$$ and
we count epimorphisms from this group. If $\varphi(\tau) = 1$ then
$\varphi(x_0) = 1$, so $\varphi(x_1)$ is a non-trivial element of
order $3$ and $\varphi(\sigma) \not \in \sg{\varphi(x_1)}$. There
are $6$ such epimorphisms. So assume $\varphi(\tau)$ is a
non-trivial involution. The relations give
$[\varphi(\sigma),\varphi(\tau)] = 1$, so $\varphi(\sigma)$ is
either $1$ or $\varphi(\tau)$. Moreover, it turns out that
$\varphi(x_0 \tau x_0^{-1} \tau)^2 = \varphi(x_0)$ whenever
$\varphi(x_0)$ has order dividing $3$. For each possible value of
$\varphi(\tau)$ we get $8$ epimorphisms with $\varphi(\s) = 1$ and
$2$ more with $\varphi(\s) = \varphi(\tau)$. There are $3$
involutions, providing us with $6 + 3 \cdot 10 = 36$ epimorphisms
all together. Dividing by the number of automorphisms, we have $6$
Galois extensions of $\mQ_3$ with Galois group $S_3$.

Each Galois extension of this type contains $3$ non-Galois cubic
extensions of $\mQ_3$.  
The $S_3$-extension is determined by the cubic extension,
being its Galois closure. So we have $3 \cdot 6 = 18$ non-Galois
cubic subfields of a fixed algebraic closure $\bar{\mQ_3}$, consisting of $6$ isomorphism classes.
Summing up, there are $1+1+(1+3+(4+6))+1=17$
sensitive field extensions, up to isomorphism.

\iffurther

\section{Further ideas}

\begin{ques}\label{question on wild case}
Is there an example of a $(K,M,G)$ that satisfies Condition
\Caseviii\ wildly ($G$ is wildly $K$-admissible) for which
\Caseiv\ is not satisfied?
\end{ques}

\fi 

\newcommand\arxiv[1]{{\texttt{#1}}}

\end{document}

\bibitem{Lid} {\sc Liedahl, S.}, Maximal subfields of $Q(i)$-division rings. {\sl  Pacific J. Math.} {\bf 175} (1996),
 no. 1, 147-160.

\bibitem{Sal1} {\sc Saltman, D.},
Generic Galois extensions. {\sl Proc. Nat. Acad. Sci. U.S.A.
77} (1980), {\bf 3}, part 1, 1250-1251.

\bibitem{Ser3} {\sc Serre, J.-P.}, Local fields. Graduate Texts in Mathematics, {\bf 67}. Springer-Verlag, New York-Berlin, 1979.


